\pgfplotsset{compat=1.14}
\newrobustcmd{\MakeTitleCase}[1]{%
	\ifthenelse{\ifcurrentfield{booktitle}\OR\ifcurrentfield{booksubtitle}%
		\OR\ifcurrentfield{maintitle}\OR\ifcurrentfield{mainsubtitle}%
		\OR\ifcurrentfield{journaltitle}\OR\ifcurrentfield{journalsubtitle}%
		\OR\ifcurrentfield{issuetitle}\OR\ifcurrentfield{issuesubtitle}%
		\OR\ifentrytype{book}\OR\ifentrytype{mvbook}\OR\ifentrytype{bookinbook}%
		\OR\ifentrytype{booklet}\OR\ifentrytype{suppbook}%
		\OR\ifentrytype{collection}\OR\ifentrytype{mvcollection}%
		\OR\ifentrytype{suppcollection}\OR\ifentrytype{manual}%
		\OR\ifentrytype{periodical}\OR\ifentrytype{suppperiodical}%
		\OR\ifentrytype{proceedings}\OR\ifentrytype{mvproceedings}%
		\OR\ifentrytype{reference}\OR\ifentrytype{mvreference}%
		\OR\ifentrytype{report}\OR\ifentrytype{thesis}}
	{#1}
	{\MakeSentenceCase{#1}}}
\renewrobustcmd*{\bibinitdelim}{\,} % halbes Leerzeichen zwischen Initialien
\theoremstyle{definition}
\newtheorem{definition}{Definition}
\theoremstyle{plain}
\newtheorem{theorem}{Theorem}[section]
\newaliascnt{lemma}{theorem}
\newtheorem{lemma}[lemma]{Lemma}
\newaliascnt{corollary}{theorem}
\newtheorem{corollary}[corollary]{Corollary}
\newaliascnt{proposition}{theorem}
\newtheorem{proposition}[proposition]{Proposition}
\newtheorem*{thm:ZP}{Theorem \ref{th:ZP_inequivalence1}}
\newtheorem*{cor:ZP}{Corollary \ref{cor:numberofAPN1}}
\theoremstyle{remark}
\newcommand{\F}{\mathbb{F}}
\newcommand{\Fpm}{\mathbb{F}_{2^m}}
\newcommand{\Fptwom}{\mathbb{F}_{2^{2m}}}
\newcommand{\Aut}{\textnormal{Aut}}
\newcommand{\GamL}{\textnormal{$\Gamma$L}}
\begin{document}
	\title{A lower bound on the number of inequivalent APN functions}
	\author{Christian Kaspers\thanks{Institute for Algebra and Geometry, Otto von Guericke University Magdeburg, 39106 Magdeburg, Germany (email: \href{mailto:christian.kaspers@ovgu.de}{\nolinkurl{christian.kaspers@ovgu.de}})} \space and Yue Zhou\thanks{Department of Mathematics, National University of Defense Technology,  410073 Changsha, China (email: \href{mailto:yue.zhou.ovgu@gmail.com}{\nolinkurl{yue.zhou.ovgu@gmail.com}})}}
	\maketitle
	
	\begin{abstract}
		In this paper, we establish a lower bound on the total number of inequivalent APN functions on the finite field with $2^{2m}$ elements, where $m$ is even. We obtain this result by proving that the APN functions introduced by Pott and the second author~\cite{zhou2013}, that depend on three parameters $k$, $s$ and $\alpha$, are pairwise inequivalent for distinct choices of the parameters $k$ and $s$. Moreover, we determine the automorphism group of these APN functions.
	\end{abstract}
	
	\paragraph{Keywords} APN function, vectorial Boolean function, CCZ-equivalence, EA-equivalence

\section{Introduction}
\label{sec:Introduction}
	Denote by $\F_{2}^n$ the $n$-dimensional vector space over the finite field $\F_2$ with two elements. A function from $\F_2^n$ to $\F_2^m$ is called a \emph{vectorial Boolean function} if $m \ge 2$ or simply a \emph{Boolean function} if $m=1$. Vectorial Boolean and Boolean functions are of particular interest in cryptography but they also have important applications in coding theory and design theory. In this paper, we consider vectorial Boolean functions from $\F_{2}^n$ to $\F_2^n$, we say, functions \emph{on} $\F_2^n$, with optimal differential properties: they are called \emph{almost perfect nonlinear functions}, in brief \emph{APN functions}, and, from a cryptographic standpoint, they offer the best resistance to the differential attack.\par
	
	APN functions have been studied for several decades. While first, only power APN~functions $x \mapsto x^d$ were known, by now, numerous infinite families of non-power APN functions have been found. In \autoref{sec:known_classes}, we present a short overview over the known APN functions. The most fascinating open problem regarding APN functions is whether in vector spaces of even dimension more than one APN permutation exists. So far, only one such function is known: it exists on $\F_2^6$ and was found by \textcite{dillon2010}.\par 
	
	Besides this big APN problem, there are several other intriguing open questions which are related to APN functions: it is, for example, unknown how many APN functions exist on $\F_2^n$ for any given $n$. While there are, as mentioned above, several infinite families of APN functions, many of these constructions provide equivalent functions. By equivalent we mean that there is some transformation that defines an equivalence relation between vectorial Boolean functions and preserves the APN property. So, to reformulate the problem more precisely: it is unknown how many \emph{inequivalent} APN functions exist on the vector space $\F_2^n$.\par 
	
	As far as power APN functions are concerned, equivalence problems are relatively well studied. It has also been shown that many of the non-power APN functions are inequivalent to the power functions. For the several classes of non-power APN functions, however, it is in many cases neither clear how many inequivalent functions a construction provides nor whether two distinct constructions always lead to inequivalent functions.\par
	
	In this paper, we present a first benchmark on the number of inequivalent APN~functions on the $2m$-dimensional vector space $\F_{2}^{2m}$, where $m$ is even. We establish a lower bound on this number by proving that the non-power APN functions found by Pott and the second author~\cite{zhou2013}, which depend on three parameters, are inequivalent for different choices of two of the parameters. We state our main results here already. Their proofs can be found in \autoref{sec:ZP_APN}. Note that, in \autoref{th:ZP_inequivalence1}, we identify the vector space $\F_2^{2m}$ with the finite field $\Fptwom$, and we describe the Pott-Zhou APN~function by its bivariate representation which will be explained in \autoref{sec:Preliminaries}.
	
	\begin{theorem}
		\label{th:ZP_inequivalence1}
			Let $m\ \ge 4$ be an even integer. Let $k,\ell$ be integers coprime to $m$ such that $0 < k,\ell < \frac{m}{2}$, let $s,t$ be even integers with $0 \le s,t \le \frac{m}{2}$, and let $\alpha,\beta \in \Fpm^*$ be non-cubes. Two Pott-Zhou APN~functions $f_{k,s,\alpha}, f_{\ell,t,\beta} \colon \Fptwom \to \Fptwom$, where
			\begin{align*}
				f_{k,s,\alpha}(x,y) = \left(x^{2^k+1} + \alpha y^{(2^k+1)2^s},\ xy\right)	&&\text{and}	&& f_{\ell,t,\beta}(x,y) = \left(x^{2^\ell+1} + \beta y^{(2^\ell+1)2^t},\ xy\right),
			\end{align*}
			are CCZ-equivalent if and only if $k = \ell$ and $s = t$.
	\end{theorem}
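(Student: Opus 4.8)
The \emph{if} direction is immediate. When $k=\ell$ and $s=t$, the functions $f_{k,s,\alpha}$ and $f_{k,s,\beta}$ are in fact linearly equivalent for any two non-cubes $\alpha,\beta\in\Fpm^*$: the substitution $(x,y)\mapsto(x,\lambda y)$ replaces $\alpha$ by $\alpha\lambda^{(2^k+1)2^s}$ and scales the $xy$-component by $\lambda$, and since $\gcd(2^k+1,2^m-1)=2^{\gcd(k,m)}+1=3$ (here $m$ is even and $\gcd(k,m)=1$), the element $\lambda^{(2^k+1)2^s}$ runs through all cubes as $\lambda$ runs through $\Fpm^*$; composing in addition with a Frobenius map $z\mapsto z^{2^j}$ on both coordinates, with $j$ odd, moves $\alpha$ into the other non-cube coset. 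Readjusting by a suitable linear map on the codomain to restore the $xy$-component then produces the required equivalence.

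For the \emph{only if} direction, suppose $f_{k,s,\alpha}$ and $f_{\ell,t,\beta}$ are CCZ-equivalent. Both functions are quadratic APN functions, so by Yoshiara's theorem this implies that they are EA-equivalent; and since both are homogeneous of algebraic degree $2$ (each of $x^{2^k+1}$ and $y^{(2^k+1)2^s}$ has binary weight $2$, and $xy$ is bilinear), comparing homogeneous degree-$2$ parts in an EA relation $A_1\circ f_{k,s,\alpha}\circ A_2+A=f_{\ell,t,\beta}$ shows that in fact $L\circ f_{k,s,\alpha}\circ L'=f_{\ell,t,\beta}$ for the linear parts $L,L'$ of $A_1,A_2$. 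So it suffices to show that a \emph{linear} equivalence $L\circ f_{k,s,\alpha}\circ L'=f_{\ell,t,\beta}$, with $L,L'$ linear permutations of $\Fptwom$, forces $k=\ell$ and $s=t$.

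The first task is to pin down the shape of $L$ and $L'$ in bivariate form. Using the trace form on $\Fptwom\cong\Fpm\times\Fpm$, one checks that the component $(\gamma_1,\gamma_2)\cdot f_{k,s,\alpha}$ fails to be bent exactly when $\gamma_2=0$, so the non-bent components form the subspace $\Fpm\times\{0\}$, which is independent of $k,s,\alpha$; a linear equivalence must map this subspace, and dually its annihilator, to itself, which already forces $L$ to be block triangular with respect to the two coordinates. Writing $L$ and $L'$ as $2\times2$ matrices of linearized polynomials over $\Fpm$, substituting into $L\circ f_{k,s,\alpha}\circ L'=f_{\ell,t,\beta}$, and separating in each coordinate identity the pure-$x$, pure-$y$, and mixed parts, one uses that the $xy$-component of $f_{\ell,t,\beta}$ is a single mixed monomial to deduce — after a case analysis in which the non-cube hypothesis on $\alpha,\beta$ eliminates the genuinely mixed configurations — that, up to exchanging the two coordinates, $L'(x,y)=(c\,x^{2^a},\,d\,y^{2^b})$ and $L$ is likewise a monomial diagonal or anti-diagonal map. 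The functional equation now collapses to monomial identities over $\Fpm$. Matching the $xy$-term forces $a\equiv b\pmod m$; matching the $x^{2^\ell+1}$-term gives $(2^k+1)2^{j}\equiv 2^\ell+1\pmod{2^m-1}$ for some integer $j$, and comparing the two nonzero binary digits yields $\ell\in\{k,m-k\}$, hence $\ell=k$ since $0<k,\ell<\tfrac{m}{2}$; matching the remaining $y$-term gives, after cancelling the common factor $2^k+1$ — legitimate modulo $(2^m-1)/3$, on which $2$ has multiplicative order $m$ since $m\ge4$ — a congruence of the form $s\equiv\pm t\pmod m$, which together with $s,t$ even and $0\le s,t\le\tfrac{m}{2}$ forces $s=t$. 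Specialising this whole analysis to $\ell=k$, $t=s$, $\beta=\alpha$ simultaneously describes all self-equivalences, i.e.\ the automorphism group.

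The main obstacle is the structural step: a priori $L'$ is an arbitrary linear permutation of $\Fptwom$, and turning the single identity $L\circ f_{k,s,\alpha}\circ L'=f_{\ell,t,\beta}$ — an equation among sums and products of linearized polynomials and their Frobenius twists — into the rigid block-monomial form requires a careful case distinction that leans on the bent-component invariant above, on the bound for the number of roots of a linearized polynomial, on the identity $\gcd(2^i-1,2^m-1)=2^{\gcd(i,m)}-1$, and, in the mixed cases, on $\alpha$ and $\beta$ being non-cubes. Once the shapes of $L$ and $L'$ are known, passing to $k=\ell$ and $s=t$ is routine exponent arithmetic.
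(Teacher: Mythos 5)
Your outline identifies the right overall strategy --- pass from CCZ- to EA-equivalence via Yoshiara's theorem, reduce to the linear parts, show the equivalence maps are block-monomial, and finish with exponent arithmetic --- and your final exponent computations ($\ell \in \{k, m-k\}$ hence $\ell = k$; $s \equiv \pm t \pmod{m}$ hence $s=t$) match what the paper does at the very end. But the proposal has a genuine gap: the step you yourself call ``the main obstacle,'' namely proving that $L$ and $L'$ must be monomial in each block, is asserted (``after a case analysis \dots one deduces'') rather than carried out, and that step \emph{is} the proof. In the paper it occupies essentially the entire argument: one first restricts to $y=0$ (resp.\ $x=0$) to reduce to an equivalence of Gold functions, for which a refined version of the Gold equivalence theorem is proved showing that for $m \ge 5$ the equivalence maps are linearized \emph{monomials}; this yields $k=\ell$ and shows each block $L_1,\dots,L_4$ is a monomial, a specific binomial (possible only when $s=0$, with coefficients constrained by the non-cube $\alpha$), or zero; and then a sixteen-case analysis of the possible combinations eliminates the binomial and mixed configurations. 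None of this is routine, and several of the exclusions fail for small $m$ (the paper needs $2k,3k \not\equiv \pm k \pmod m$, hence $m \ge 6$, and handles $m=4$ by a separate $\Gamma$-rank computation --- a case your sketch does not address at all, even though the Gold automorphism group for $m=4$ genuinely contains non-monomial maps).

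Two further points. First, your proposed entry point via bent components is different from the paper's and is itself unproved (``one checks that\dots''); even granting that the non-bent components of $f_{k,s,\alpha}$ are exactly those indexed by $\Fpm \times \{0\}$, the invariance of this set under equivalence constrains the adjoint of the \emph{output-side} linear map to preserve a subspace, which gives only block-triangularity of that map --- still far from the monomial-diagonal form you need, and not obviously a constraint on $L'$ at all. Second, the reduction ``comparing homogeneous degree-$2$ parts shows $L\circ f\circ L' = f_{\ell,t,\beta}$ exactly'' is not correct as stated: for a quadratic $f$ and linear $L'$, the composition $f\circ L'$ generally acquires linearized terms (cross terms $x^{2^i+2^j}$ with $i=j$), so one must retain an affine correction term on the right-hand side, as the paper does with its polynomials $M_A, M_B$. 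This is fixable, but as written the identity you propose to analyze is stronger than what EA-equivalence gives you.
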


	With the help of \autoref{th:ZP_inequivalence1}, we immediately obtain a lower bound on the number of inequivalent APN functions on $\F_2^{2m}$, where $m$ is even:
	\begin{corollary}
	\label{cor:numberofAPN1}
		On $\Fptwom$, where $m \ge 4$ is even, there exist 
		\[
			\left(\left\lfloor\frac{m}{4}\right\rfloor+1\right) \frac{\varphi(m)}{2}
		\]
		CCZ-inequivalent Pott-Zhou APN functions, where $\varphi$ denotes Euler's totient function.
	\end{corollary}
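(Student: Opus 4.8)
The strategy is to fix once and for all a non-cube $\alpha \in \Fpm^*$ — such an element exists because $3 \mid 2^m-1$ for even $m$, so the cubes form a proper subgroup of $\Fpm^*$ — and to count the Pott-Zhou APN functions $f_{k,s,\alpha}$ as $(k,s)$ ranges over all admissible pairs, i.e.\ over all integers $k$ coprime to $m$ with $0 < k < \frac m2$ and all even integers $s$ with $0 \le s \le \frac m2$. By \autoref{th:ZP_inequivalence1}, two such functions $f_{k,s,\alpha}$ and $f_{\ell,t,\alpha}$ are CCZ-equivalent if and only if $(k,s) = (\ell,t)$; in particular they are pairwise distinct. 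Hence the number of CCZ-inequivalent functions obtained in this way equals the number of admissible pairs $(k,s)$, and the whole proof reduces to an elementary count of these pairs.

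First I would count the admissible values of $s$: these are the even integers in the interval $\left[0,\tfrac m2\right]$, namely $0, 2, 4, \dots, 2\left\lfloor \tfrac m4 \right\rfloor$, so there are exactly $\left\lfloor \tfrac m4 \right\rfloor + 1$ of them (one checks the two cases $m \equiv 0$ and $m \equiv 2 \pmod 4$ separately, if desired).

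Next I would count the admissible values of $k$, that is, the integers in $\left(0,\tfrac m2\right)$ coprime to $m$. Here I would use the involution $j \mapsto m-j$ on the set $U$ of integers in $(0,m)$ coprime to $m$; this set has cardinality $\varphi(m)$, and since $\gcd(j,m) = \gcd(m-j,m)$ the involution is well defined on $U$ and maps $U \cap \left(0,\tfrac m2\right)$ bijectively onto $U \cap \left(\tfrac m2, m\right)$. Because $m$ is even we have $\gcd\!\left(\tfrac m2, m\right) = \tfrac m2 \ge 2$ (using $m \ge 4$), so the midpoint $\tfrac m2 \notin U$ and the involution has no fixed point; therefore exactly half of the $\varphi(m)$ elements of $U$ lie below $\tfrac m2$, giving $\tfrac{\varphi(m)}{2}$ admissible values of $k$.

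Multiplying the two counts yields the asserted total of $\left(\left\lfloor \tfrac m4 \right\rfloor + 1\right)\tfrac{\varphi(m)}{2}$ pairwise CCZ-inequivalent Pott-Zhou APN functions on $\Fptwom$. I do not expect any genuine obstacle here: once \autoref{th:ZP_inequivalence1} is in hand, the corollary is a routine combinatorial argument, the only point requiring a moment's care being the symmetry argument for the residues coprime to $m$ together with the observation that the parity of $m$ forces $\tfrac m2$ to be a non-unit modulo $m$.
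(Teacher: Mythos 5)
Your proposal is correct and follows the same route as the paper: invoke \autoref{th:ZP_inequivalence1} to reduce the problem to counting admissible pairs $(k,s)$, then count $\left\lfloor\frac{m}{4}\right\rfloor+1$ choices for $s$ and $\frac{\varphi(m)}{2}$ choices for $k$. The paper's proof simply asserts the count of $k$'s without spelling out the involution $j \mapsto m-j$, so your version is the same argument with that step made explicit.
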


\section{Preliminaries}
\label{sec:Preliminaries}
	In this section, we will present all the definitions and basic results needed to follow the paper. From now on, we will only consider vectorial Boolean functions on $\F_2^n$, and we will, in most cases, identify the $n$-dimensional vector space $\F_2^n$ over $\F_{2}$ with the finite field $\F_{2^n}$ with $2^n$ elements. This will allow us to use finite field operations and notations. Note that any function on the finite field $\F_{2^n}$ can be written as a univariate polynomial mapping of degree at most $2^n-1$. Furthermore, denote by $\F_{2^n}^*$ the multiplicative group of $\F_{2^n}$. We start by defining APN functions.
	
	\begin{definition}
	\label{def:APN}
		A function $f \colon \F_{2^n} \to \F_{2^n}$ is called \emph{almost perfect nonlinear} (APN) if the equation 
		\[
			f(x+a) + f(x) = b
		\]
		has exactly $0$ or $2$ solutions for all $a,b \in \F_{2^n}$, where $a$ is nonzero.
	\end{definition}

	There are several equivalent definitions of almost perfect nonlinear functions. We refer to \textcite{budaghyan2014} and \textcite{pott2016} for an extended overview over these functions. In this paper, we will only consider \emph{quadratic} APN functions. We define this term using the coordinate function representation of a function on $\F_2^n$.
	
	\begin{definition}
		Let $f \colon \F_2^n \to \F_2^n$, where
		\[
			f(x_1,\dots,x_n) = 
			\begin{pmatrix}
				f_1(x_1, \dots, x_n)\\\vdots\\ f_n(x_1, \dots, x_n)
			\end{pmatrix}
		\]
		for Boolean coordinate functions $f_1, \dots, f_n\colon \F_2^n \to \F_2$. The maximal degree of the coordinate functions $f_1, \dots, f_n$ is called the \emph{algebraic degree} of $f$. We call a function of algebraic degree $2$ \emph{quadratic}, and a function of algebraic degree $1$ \emph{affine}. If $f$ is affine and has no constant term, we call $f$ \emph{linear}.
	\end{definition}
	
	In polynomial mapping representation, any quadratic function $f$ on $\F_{2^n}$ can be written in the form
	\[
		f(x) = \sum_{\substack{i,j = 0 \\ i \le j}}^{n-1}\alpha_{i,j} x^{2^i+2^j} + \sum_{i = 0}^{n-1}\beta_i x^{2^i} + \gamma,
	\]
	and any affine function $f \colon \F_{2^n} \to \F_{2^n}$ can be written as
	\[
		f(x) = \sum_{i=0}^{n-1}\beta_i x^{2^i} + \gamma.
	\]
	If $f$ is affine and $\gamma = 0$, then $f$ is linear. Similar terms are used to describe polynomials over $\F_{2^n}$. Denote by $\F_{2^n}[X]$ the univariate polynomial ring over $\F_{2^n}$. A polynomial of the form
	\[
		P(X) = \sum_{i\ge 0}\alpha_i X^{2^i}
	\]
	is called a \emph{linearized polynomial}. Note that there is a one-to-one correspondence between linear functions on $\F_2^n$ and linearized polynomials in $\F_{2^n}[X] / (X^{2^n}-X)$.  In the same way as for univariate polynomials, we define a linearized polynomial in the multivariate polynomial ring~$\F_{2^n}[X_1,\dots, X_r]$ as a polynomial of the form
	\[
		P(X_1,\dots,X_r) = \sum_{j=1}^{r} \left(\sum_{i\ge 0}\alpha_{i,j} X_j^{2^i}\right).
	\]
	
	We will use such polynomials to study the equivalence of APN functions. In this paper, we are interested in \emph{inequivalent} APN functions. There are several notions of equivalence between vectorial Boolean functions that preserve the APN property. We list them in the following definition.
	
	\begin{definition}
	\label{def:equivalence}
		Two functions $f,g \colon \F_{2^n} \to \F_{2^n}$ are called 
		\begin{itemize}
			\item \emph{Carlet-Charpin-Zinoviev equivalent} (CCZ-equivalent), if there is an affine permutation $C$ on $\F_{2^n} \times \F_{2^n}$ such that
			\[
			C(G_f) = G_g,
			\]
			where $G_f = \{(x,f(x)) : x \in \F_{2^n}\}$ is the graph of $f$,
			\item \emph{extended affine equivalent} (EA-equivalent) if there exist three affine functions $A_1,A_2,A_3 \colon \F_{2^n} \to \F_{2^n}$, where $A_1$ and $A_2$ are permutations, such that
			\[
			f(A_1(x)) = A_2(g(x)) + A_3(x),
			\]
			\item \emph{affine equivalent} if they are extended affine equivalent and $A_3(x) = 0$,
			\item \emph{linearly equivalent} if they are affine equivalent and $A_1, A_2$ are linear.
		\end{itemize}
	\end{definition}

	CCZ-equivalence is the most general known notion of equivalence that preserves the APN property. Obviously, linear equivalence implies affine equivalence, and affine equivalence implies EA-equivalence. Moreover, it is well known that EA-equivalence implies CCZ-equivalence but, in general, the converse is not true. For quadratic APN~functions, however, \textcite{yoshiara2012} proved that also the converse holds.
	
	\begin{proposition}[{\textcite[Theorem~1]{yoshiara2012}}]
	\label{prop:yoshiara}
		Let $f$ and $g$ be quadratic APN functions on a finite field $\F_{2^n}$ with $n \ge 2$. Then $f$ is CCZ-equivalent to $g$ if and only if $f$ is EA-equivalent to $g$.
	\end{proposition}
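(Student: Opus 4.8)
The plan is to dispatch the easy direction, reduce the hard direction to a single linear-algebra claim, and then prove that claim by a degree count. The forward implication, that EA-equivalence implies CCZ-equivalence, is standard and holds for arbitrary functions: if $f\circ A_1=A_2\circ g+A_3$ with affine permutations $A_1,A_2$ and affine $A_3$, then $(x,y)\mapsto\bigl(A_1(x),\,A_2(y)+A_3(x)\bigr)$ is an affine permutation of $\F_{2^n}\times\F_{2^n}$ carrying $G_g$ onto $G_f$, so only the converse needs work. Normalising by translations (which change neither EA-class), I would assume $f(0)=g(0)=0$ and that the CCZ-equivalence is realised by an $\F_2$-linear permutation $\mathcal C$ with linear blocks $L_1,L_2,L_3,L_4$, that is, $\mathcal C(x,y)=(L_1x+L_2y,\,L_3x+L_4y)$ with $\mathcal C(G_f)=G_g$; this is equivalent to saying that $F:=L_1+L_2\circ f$ is a permutation of $\F_{2^n}$ and $g\circ F=L_3+L_4\circ f$. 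Everything then reduces to proving $L_2=0$: granting that, $F=L_1$ is linear, bijectivity of $\mathcal C$ forces $L_1,L_4\in\GL(\F_{2^n})$, and $g=L_4\circ f\circ L_1^{-1}+L_3\circ L_1^{-1}$ rearranges to $f\circ L_1^{-1}=L_4^{-1}\circ g+(\text{a linear map})$, which is an EA-equivalence.

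To prove $L_2=0$ one exploits that \emph{both} $f$ and $g$ are quadratic. Write $f=f^{(2)}+f^{(1)}$ and $g=g^{(2)}+g^{(1)}$ for their homogeneous quadratic and linear parts (so $g^{(2)}\neq0$, since $g$ is genuinely quadratic), and put $Q:=L_2\circ f^{(2)}$, so that $F$ has linear part $F^{(1)}=L_1+L_2\circ f^{(1)}$ and homogeneous quadratic part $Q$. Since $g\circ F=L_3+L_4\circ f$ has algebraic degree at most $2$, expanding $g(F(x))=g^{(2)}\bigl(F^{(1)}(x)+Q(x)\bigr)+g^{(1)}\bigl(F^{(1)}(x)+Q(x)\bigr)$ and separating homogeneous parts shows that the degree-$4$ part of $g\circ F$ equals the degree-$4$ part of $g^{(2)}\circ Q$ and its degree-$3$ part equals the homogeneous degree-$3$ expression $B_{g^{(2)}}\bigl(F^{(1)}(\cdot),\,Q(\cdot)\bigr)$, where $B_{g^{(2)}}$ denotes the bilinear map of $g^{(2)}$; both must therefore vanish identically.

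Now the APN hypothesis enters. The quadratic part $f^{(2)}$ of a quadratic APN function is itself APN, because $f^{(2)}(x+a)+f^{(2)}(x)$ is affine in $x$ with the same linear part $y\mapsto B_f(y,a)$ as $f(x+a)+f(x)$; and for $n\ge3$ a quadratic APN function has no affine component --- its nonzero components are quadratic forms of rank at least $n-2$ --- so $\operatorname{Im}f^{(2)}$ lies in no affine hyperplane and hence spans $\F_{2^n}$; consequently $\operatorname{Im}Q=L_2\bigl(\operatorname{Im}f^{(2)}\bigr)$ spans $\operatorname{Im}L_2$. Feeding this, together with the analogous large-rank property of the component forms of the APN function $g^{(2)}$, into the two identities of the previous paragraph forces $Q\equiv0$, and then, $\operatorname{Im}f^{(2)}$ spanning $\F_{2^n}$, $L_2=0$. (An alternative bookkeeping solves explicitly for $F^{-1}$, which is again of the shape ``linear plus a low-rank quadratic term'', and observes that demanding $g=(L_3+L_4\circ f)\circ F^{-1}$ be quadratic forces $\operatorname{rank}L_4$ so small that $y\mapsto(L_2y,L_4y)$ cannot be injective, contradicting bijectivity of $\mathcal C$.) The case $n=2$ is handled by hand, there being essentially a single APN function on $\F_4$; and once $L_2=0$ the proof concludes as in the first paragraph.

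I expect the main obstacle to be exactly this passage from the vanishing of the degree-$3$ and degree-$4$ parts to $L_2=0$: it rests on the classification of the possible ranks (equivalently, Walsh spectra) of the component quadratic forms of a quadratic APN function, together with a somewhat delicate case analysis on $\operatorname{rank}L_2$, whereas everything around it is formal. It also helps to keep in mind the coding-theoretic picture, in which CCZ-equivalence of $f,g$ is permutation-equivalence of the binary codes $\langle\mathbf{1},\,x_1,\dots,x_n,\,f_1,\dots,f_n\rangle$ of length $2^n$ while EA-equivalence is such an equivalence preserving the first-order Reed--Muller subcode $\langle\mathbf{1},\,x_1,\dots,x_n\rangle$; the identity $L_2=0$ then says precisely that, for quadratic APN functions, every code equivalence respects that subcode, which can likewise be attacked by analysing the low-weight codewords.
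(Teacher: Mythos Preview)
The paper does not give its own proof of this proposition; it is stated with a citation to Yoshiara and used as a black box. So there is no ``paper's proof'' to compare against, only Yoshiara's original argument.

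Your reduction is the right starting point: normalise to a linear $\mathcal C$, write it in block form, observe that $F=L_1+L_2\circ f$ is a permutation with $g\circ F=L_3+L_4\circ f$, and reduce everything to showing $L_2=0$. That much is standard and correct.

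The argument breaks down at the degree bookkeeping. You assert that the degree-$3$ part of $g\circ F$ equals $B_{g^{(2)}}(F^{(1)},Q)$, but $g^{(2)}\circ Q$ is \emph{not} homogeneous of degree~$4$: over $\F_2$, products of degree-$2$ multilinear forms collapse under $x_i^2=x_i$ (for instance $(x_1x_2)(x_1x_3)=x_1x_2x_3$), and in the univariate picture $X^{2^a+2^b}\cdot X^{2^a+2^c}=X^{2^{a+1}+2^b+2^c}$ has $2$-weight~$3$. So the vanishing of the degree-$4$ part tells you only that $g^{(2)}\circ Q$ has algebraic degree~$\le 3$, and the degree-$3$ part of $g\circ F$ is $B_{g^{(2)}}(F^{(1)},Q)$ \emph{plus} the degree-$3$ contribution of $g^{(2)}\circ Q$. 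The two constraints you isolate are therefore weaker and more entangled than you state.

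Even with this corrected, the passage from those constraints to $Q\equiv 0$ is not carried out; you flag it yourself as the main obstacle, and rightly so. This is where the APN hypothesis on \emph{both} $f$ and $g$ must do real work (the implication CCZ $\Rightarrow$ EA fails for general quadratic functions), and the hand-wave via ``high-rank components'' and ``$\operatorname{Im}f^{(2)}$ spans $\F_{2^n}$'' does not close the gap: knowing that $g^{(2)}$ has degree~$\le 3$ on the image of $Q$ says nothing direct about $g^{(2)}$ on the \emph{span} of that image, since $g^{(2)}$ is quadratic rather than linear.

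For comparison, Yoshiara's proof does not proceed by such a degree count at all. It passes through the combinatorial objects canonically attached to a quadratic APN function (the associated bilinear system, or equivalently the corresponding dimensional dual hyperoval / semibiplane) and shows, by analysing their automorphisms, that any CCZ-map must respect the ``linear'' subspace --- exactly your coding-theoretic reformulation at the end, but carried out structurally rather than by elementary degree chasing. Your outline is a reasonable heuristic for why the result should hold, but as it stands it is a sketch with a genuine gap at the decisive step, not a proof.
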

	
	In this paper, \autoref{prop:yoshiara} will allow us to prove the CCZ-inequivalence of certain quadratic APN functions by showing that they are EA-inequivalent.\par 
	
	We will often consider functions on vector spaces of even dimension~$n = 2m$. Such functions can be represented in a \emph{bivariate} description as a map on $\Fpm^2 := \Fpm \times \Fpm$ with two coordinate functions. In this case, we will describe EA-equivalence as follows: Two functions $f,g \colon \Fpm^2 \to \Fpm^2$, where 
	\begin{align*}
		f(x,y) &= (f_1(x,y), f_2(x,y))& \text{and}	&&g(x,y) = (g_1(x,y), g_2(x,y))
	\end{align*}
	for coordinate functions $f_1,f_2,g_1,g_2 \colon \Fpm^2 \to \Fpm$, are EA-equivalent, if there exist affine functions $L,N,M \colon \Fpm^2 \to \Fpm^2$, where $L$ and $N$ are bijective, such that
	\[
		f(L(x,y)) = N(g(x,y)) + M(x,y).
	\]
	Write 
	\[
		L(x,y) = (L_A(x,y), L_B(x,y)) \quad \mbox{and} \quad M(x,y) = (M_A(x,y), M_B(x,y))
	\] 
	for affine functions $L_A, L_B, M_A, M_B \colon \Fpm^2 \to \Fpm$ and 
	\[
		N(x,y) = \left(N_1(x) + N_3(y),\ N_2(x) + N_4(y)\right)
	\] 
	for affine functions $N_1, \dots, N_4 \colon \Fpm \to \Fpm$. In terms of these newly defined functions, $f$ and $g$ are EA-equivalent if both
	\begin{align}
	\label{eq:LinEquiv_1}
		f_1(L_A(x,y), L_B(x,y)) &= N_1(g_1(x,y)) + N_3(g_2(x,y)) + M_A(x,y),\\
	\label{eq:LinEquiv_2}
		f_2(L_A(x,y), L_B(x,y)) &= N_2(g_1(x,y)) + N_4(g_2(x,y)) + M_B(x,y)
	\end{align}
	hold. They are affine equivalent if $M(x,y) = 0$, and they are linearly equivalent if $M(x,y) = 0$ and the functions $L$ and $N$ are linear. Studying EA-equivalence, the constants of the affine functions that determine the equivalence can be omitted as they only lead to a shift in the input and in the output. Hence, we will usually consider the functions $L_A, L_B$, $M_A, M_B$, $N_1,\dots,N_4$ as linear functions and describe them as linearized polynomials in the respective polynomial ring. Equations \cref{eq:LinEquiv_1} and \cref{eq:LinEquiv_2} will form the general framework in the proof of our main theorem.\par
	
	Not only will we solve equivalence problems in this paper, but we will also present the size of the automorphism group of several vectorial Boolean functions.
	
	\begin{definition}
	\label{def:automorphism_group}
		Let $f$ be a vectorial Boolean function on $\F_{2^n}$. We define the \emph{automorphism group of $f$ under CCZ-equivalence} as the group of affine permutations on $\F_{2^n} \times \F_{2^n}$ that preserve the graph of $f$. We denote this automorphism group by $\Aut(f)$. We analogously define the \emph{automorphism group $\Aut_{EA}(f)$ of $f$ under EA-equivalence} and the \emph{automorphism group $\Aut_L(f)$ of $f$ under linear equivalence} as the groups of the respective equivalence mappings on $\F_{2^n} \times \F_{2^n}$.
	\end{definition} 

	Note that for CCZ-, EA- and affine equivalence, the automorphism group of $f$ can be also interpreted as the automorphism group of an associated code, see e.\,g. \textcite[Section~7]{edel2009}. Most importantly, for any vectorial Boolean function $f$ on $\F_{2^n}$, the automorphism group $\Aut(f)$ is isomorphic to the automorphism group of the code
	\[
		\begin{pmatrix}1\\x\\f(x)\end{pmatrix},
	\]
	where $x \in \F_2^n$. Regarding the automorphism groups of APN functions, we need the following two lemmas. The first one follows from \citeauthor{yoshiara2012}'s~\cite{yoshiara2012} proof of \autoref{prop:yoshiara}.
	
	\begin{lemma}
	\label{lem:CCZ_EA}
		Let $f$ be a quadratic APN function on the finite field $\F_{2^n}$. Then
		\[
		\Aut(f) = \Aut_{EA}(f).
		\]
	\end{lemma}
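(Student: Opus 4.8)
The plan is to prove the two inclusions $\Aut_{EA}(f)\subseteq\Aut(f)$ and $\Aut(f)\subseteq\Aut_{EA}(f)$ separately; only the second will use the quadratic hypothesis. For the first inclusion I would start from an arbitrary element of $\Aut_{EA}(f)$, which by definition comes from an EA-equivalence of $f$ with itself, that is, from affine permutations $A_1,A_2$ and an affine map $A_3$ on $\F_{2^n}$ with $f(A_1(x)) = A_2(f(x)) + A_3(x)$. The associated permutation on $\F_{2^n}\times\F_{2^n}$ is $C(x,y) = (A_1(x),\,A_2(y)+A_3(x))$, whose linear part is block-triangular with invertible diagonal blocks coming from $A_1$ and $A_2$, so $C$ is an affine permutation. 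Since $C(x,f(x)) = (A_1(x),\,A_2(f(x))+A_3(x)) = (A_1(x),\,f(A_1(x))) \in G_f$, we get $C(G_f) = G_f$, hence $C\in\Aut(f)$; this is just the automorphism version of the fact that EA-equivalence implies CCZ-equivalence.

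For the reverse inclusion, let $C\in\Aut(f)$, i.e.\ an affine permutation of $\F_{2^n}\times\F_{2^n}$ with $C(G_f)=G_f$, and write $C(x,y)=(C_1(x,y),C_2(x,y))$ for affine maps $C_1,C_2\colon\F_{2^n}\times\F_{2^n}\to\F_{2^n}$. The key claim is that $C$ is \emph{block-triangular}, meaning that the linear part of $C_1$ does not involve the $y$-variable. Granting this, I can write $C_1(x,f(x)) = A_1(x)$ for an affine map $A_1$ in $x$ alone, and $C_2(x,y) = A_3(x) + A_2(y)$ for affine maps $A_2,A_3$ (the linear part of $C_2$ always splits as a sum of a map in $x$ and a map in $y$, and the constant can be absorbed into $A_3$). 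Invertibility of $C$ then forces $A_1$ and $A_2$ to be permutations, because the linear part of $C$ is block lower-triangular and its determinant is the product of the determinants of the linear parts of $A_1$ and $A_2$. Evaluating $C(x,f(x)) = (A_1(x),\,A_3(x)+A_2(f(x))) \in G_f$ now gives $f(A_1(x)) = A_2(f(x)) + A_3(x)$, which is exactly an EA-equivalence of $f$ with itself; hence $C\in\Aut_{EA}(f)$.

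It thus remains to establish the block-triangularity of $C$, and here I would invoke \citeauthor{yoshiara2012}'s argument: in proving \autoref{prop:yoshiara}, \textcite{yoshiara2012} shows that whenever an affine permutation sends the graph of a quadratic APN function onto the graph of a quadratic APN function, it must be block-triangular in the above sense, after which the EA-equivalence is read off just as we did. Specializing this to the case where the two functions coincide yields \autoref{lem:CCZ_EA}. The main obstacle is precisely this triangularity statement, which I do not expect to reprove here; its proof genuinely uses both quadraticity — so that $G_f$ is governed by the symmetric bilinear form $(x,a)\mapsto f(x)+f(x+a)+f(a)+f(0)$ — and the APN property, which pins down the kernels of these forms for $a\neq 0$ and thereby rules out any true $y$-dependence in the first coordinate of $C$.
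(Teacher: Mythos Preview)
Your proposal is correct and matches the paper's approach: the paper gives no self-contained proof of this lemma but simply states that it follows from \citeauthor{yoshiara2012}'s proof of \autoref{prop:yoshiara}. Your elaboration --- making the easy inclusion $\Aut_{EA}(f)\subseteq\Aut(f)$ explicit and reducing the reverse inclusion to the block-triangularity of any graph-preserving affine permutation, which is precisely what \textcite{yoshiara2012} establishes en route to \autoref{prop:yoshiara} --- is exactly what one extracts from that reference, and your care to invoke the \emph{proof} rather than just the statement (since the bare equivalence $\text{CCZ}\Leftrightarrow\text{EA}$ does not by itself force the same $C$ to come from an EA-triple) mirrors the paper's own phrasing.
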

	
	The next result follows from the definitions of the different notions of equivalence in \autoref{def:equivalence}.
	\begin{lemma}
	\label{lem:AutomorphismGroup}
		Denote by $(\F_{2^n},+)$ the additive group of the finite field $\F_{2^n}$. Let $f$ be a function on $\F_{2^n}$. Then
		\[
		\Aut_{EA}(f) = \left(\F_{2^n},+\right) \rtimes \Aut_L(f).
		\]
	\end{lemma}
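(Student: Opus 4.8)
The plan is to exhibit $\Aut_{EA}(f)$ as an internal semidirect product whose normal factor is a copy of $(\F_{2^n},+)$. Throughout I use that, by \cref{def:equivalence}, every element of $\Aut_{EA}(f)$ is an affine permutation of $\F_{2^n}\times\F_{2^n}$ of the triangular shape $C(x,y)=\bigl(A_1(x),\,A_2(y)+A_3(x)\bigr)$ with $A_1,A_2$ affine bijections and $A_3$ affine — after absorbing the constant term of $A_3$ into $A_2$ we may take $A_3$ linear — and that such a $C$ lies in $\Aut_{EA}(f)$ precisely when $f(A_1 x)=A_2 f(x)+A_3 x$ for all $x$. In particular $\Aut_L(f)\le\Aut_{EA}(f)$, its elements being the maps $(x,y)\mapsto(A_1 x,A_2 y)$ with $A_1,A_2\in\GL(n,2)$ and $f\circ A_1=A_2\circ f$.

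For the normal factor, set, for each $a\in\F_{2^n}$,
\[
  \sigma_a(x,y)=\bigl(x+a,\ y+f(x+a)+f(x)\bigr).
\]
Since the functions under consideration are quadratic, $x\mapsto f(x+a)+f(x)$ is affine, so $\sigma_a$ is an affine permutation of the triangular shape above; it preserves $G_f$ because its second coordinate replaces $f(x)$ by $f(x+a)$; and a short computation gives $\sigma_a\circ\sigma_b=\sigma_{a+b}$. Thus $a\mapsto\sigma_a$ is an injective homomorphism $(\F_{2^n},+)\hookrightarrow\Aut_{EA}(f)$; let $N$ be its image. To see $N\trianglelefteq\Aut_{EA}(f)$, note that sending $C$ to the linear part of its first-coordinate transformation $A_1$ defines a group homomorphism $\Phi\colon\Aut_{EA}(f)\to\GL(n,2)$, and $N=\ker\Phi$: if $C\in\ker\Phi$ then $A_1$ is a translation $x\mapsto x+a$, and expanding the graph condition (again using quadraticity) forces $C=\sigma_a$. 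Moreover, conjugating $\sigma_a$ by an element $(x,y)\mapsto(A_1 x,A_2 y)$ of $\Aut_L(f)$ and using $f\circ A_1=A_2\circ f$ yields $\sigma_{A_1 a}$, so the conjugation action of $\Aut_L(f)$ on $N$ is the tautological linear one — this will be the action in the semidirect product.

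It remains to check that $N$ and $\Aut_L(f)$ intersect trivially and generate $\Aut_{EA}(f)$. Trivial intersection is immediate: every element of $\Aut_L(f)$ fixes $(0,0)$ whereas $\sigma_a(0,0)=(a,f(a))$, forcing $a=0$. For generation, take $C\in\Aut_{EA}(f)$ whose first-coordinate transformation is $x\mapsto L_1 x+c$. Then $\sigma_c^{-1}C=\sigma_cC$ (note $\sigma_c$ has order $2$) has trivial translation part — the constants cancel because the graph condition for $C$ at $x=0$ forces them to — so it is a \emph{linear} EA-self-equivalence of $f$, of the form $(x,y)\mapsto(L_1 x,\,L_2 y+L_3 x)$, and from the graph condition for $C$ one obtains that $L_3 x+\ell_c(L_1 x)=f(L_1 x)+L_2 f(x)$ is $\F_2$-linear in $x$, where $\ell_c$ is the linear part of $z\mapsto f(z+c)+f(z)$. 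The decisive point is to conclude that this linear function vanishes, i.e.\ that $\sigma_cC\in\Aut_L(f)$: comparing homogeneous quadratic parts yields $B_f(L_1 x,L_1 y)=L_2\,B_f(x,y)$ for the bilinear form $B_f$ of $f$, and then one uses that $f$ is quadratic \emph{and} APN — so that $B_f(a,\cdot)$ has kernel exactly $\{0,a\}$ for every $a\neq 0$ — to show that $L_1$ agrees, up to an element of $\Aut_L(f)$, with a power of the Frobenius map, for which the degree-collapsing monomials that would produce a nonzero $L_3$ do not occur. Granting this, $C=\sigma_c\cdot(\sigma_cC)$ with $\sigma_c\in N$ and $\sigma_cC\in\Aut_L(f)$, whence $\Aut_{EA}(f)=N\rtimes\Aut_L(f)\cong(\F_{2^n},+)\rtimes\Aut_L(f)$.

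I expect the main obstacle to be exactly this last step: ruling out "twisted" linear self-equivalences with $L_3\neq 0$, equivalently showing that $f(L_1 x)+L_2 f(x)$ can never be a nonzero $\F_2$-linear map when $L_1,L_2\in\GL(n,2)$ and $f$ is quadratic APN. Everything else is bookkeeping with the triangular block form of affine maps on $\F_{2^n}\times\F_{2^n}$ together with the order-$2$ translations $\sigma_a$.
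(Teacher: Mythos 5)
Your overall architecture is sound, and you have put your finger on exactly the point that the paper glosses over: the paper offers no proof at all (the lemma is merely asserted to ``follow from the definitions''), whereas you correctly reduce everything to the single non-formal claim that an EA-automorphism with trivial translation part, $(x,y)\mapsto(L_1x,\,L_2y+L_3x)$ with $f\circ L_1=L_2\circ f+L_3$ and $L_3$ linear, must have $L_3=0$. The surrounding bookkeeping --- the subgroup $N=\{\sigma_a\}$, the identification $N=\ker\Phi$, the trivial intersection $N\cap\Aut_L(f)=1$, the conjugation action --- is fine, although note that already the definition of $\sigma_a$, and the step ``$C\in\ker\Phi$ forces $C=\sigma_a$'' (which needs that no nonzero linear $M$ makes $M\circ f$ affine), use hypotheses on $f$ that the lemma does not state.

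The genuine gap is that your argument for the decisive step does not work, and in fact cannot work from the hypotheses you invoke. The claim that APN-ness forces $L_1$ to agree ``up to an element of $\Aut_L(f)$ with a power of the Frobenius map'' is unsubstantiated and false: the paper's own \autoref{lem:GoldAPNs_m=4} exhibits, for $x^3$ on $\F_{2^4}$, automorphisms with $L_1=a_1X^2+a_3X^8$ a genuine binomial (there $L_3$ still vanishes, but not for the reason you give). Worse, the statement you are trying to prove is false even for quadratic APN functions. Take $f(x)=x^3+x$ on $\F_{2^5}$, which is quadratic and APN. For every $a\in\F_{2^5}^*$ one has $f(ax)=a^3f(x)+(a^3+a)x$, so $(x,y)\mapsto(ax,\,a^3y+(a^3+a)x)$ lies in $\Aut_{EA}(f)$ with trivial translation part and $L_3\ne0$ whenever $a\ne 1$; consequently $|\Aut_{EA}(f)|=|\Aut_{EA}(x^3)|=5\cdot 2^5\cdot 31$, while $\Aut_L(f)$ consists only of the Frobenius pairs $(X^{2^u},Y^{2^u})$ and has order $5$, so $\Aut_{EA}(f)\ne(\F_{2^5},+)\rtimes\Aut_L(f)$. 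Hence the lemma needs the additional hypothesis that $f\circ L_1+L_2\circ f$ can only be linear when it is zero; this holds for the homogeneous Gold and Pott-Zhou functions, where it is precisely the assertion $M=0$ (respectively $M_A=M_B=0$) extracted inside the proofs of \autoref{th:GoldAPNs}, \autoref{lem:GoldAPNs_m=4} and \autoref{th:ZP_inequivalence1}, but it is not a consequence of ``quadratic and APN'' alone. Your proof cannot be completed without importing those function-specific computations or an equivalent hypothesis.
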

	
	To conclude this section, we state some well-known results from elementary number theory that we will use regularly throughout the paper. We summarize them in the following lemma.
	\begin{lemma}
	\label{lem:num_theory}
		\begin{enumerate}[label=(\alph*)]
			\item Let $m$ be even. Then
				\[
					2^m-1 \equiv 0 \pmod 3.
				\]
			\item Let $k$ and $m$ be integers. Then
				\[
					\gcd(2^k-1, 2^m-1) = 2^{\gcd(k,m)} - 1.
				\]
			\item Let $m$ be an even integer and let $k$ be an integer coprime to $m$. Then
				\[
					\gcd(2^k+1, 2^m-1) = 3.
				\]
		\end{enumerate}
	\end{lemma}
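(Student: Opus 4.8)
The plan is to prove the three parts in order, each by elementary congruence arguments together with the Euclidean algorithm, so that part (c) can be bootstrapped from parts (a) and (b).

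For part (a), I would simply note that $2 \equiv -1 \pmod 3$, so $2^m \equiv (-1)^m \pmod 3$; since $m$ is even, the right-hand side is $1$, giving $2^m - 1 \equiv 0 \pmod 3$.

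For part (b) — the classical Mersenne-type gcd identity — the plan is to reduce on the exponents. Writing $k = qm + r$ with $0 \le r < m$, I would use the identity
\[
	2^k - 1 = 2^r\left(2^{qm} - 1\right) + \left(2^r - 1\right)
\]
together with the divisibility $2^m - 1 \mid 2^{qm} - 1$ to conclude that $\gcd(2^k - 1,\, 2^m - 1) = \gcd(2^r - 1,\, 2^m - 1)$. Since the passage from $(k,m)$ to $(m, r)$ with $r = k \bmod m$ is exactly one step of the Euclidean algorithm, iterating this reduction terminates with $\gcd\bigl(2^{\gcd(k,m)} - 1,\, 2^0 - 1\bigr) = 2^{\gcd(k,m)} - 1$, which is the claim.

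For part (c), the plan is to trap the gcd between $3$ and $3$. On the one hand, $2^k + 1$ divides $2^{2k} - 1$, so part (b) yields $\gcd(2^k + 1,\, 2^m - 1) \mid \gcd(2^{2k} - 1,\, 2^m - 1) = 2^{\gcd(2k,m)} - 1$. Since $k$ is coprime to the even integer $m$, it is odd, and a short check on prime valuations shows $\gcd(2k, m) = 2$; hence the gcd divides $2^2 - 1 = 3$. On the other hand, $k$ odd gives $2^k \equiv -1 \pmod 3$, so $3 \mid 2^k + 1$, while part (a) gives $3 \mid 2^m - 1$; thus $3$ divides the gcd. Combining the two bounds forces the gcd to equal $3$. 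The only point requiring a little care is verifying in part (b) that the exponent recursion genuinely tracks the Euclidean algorithm on $(k,m)$ and terminates at $\gcd(k,m)$; everything else is immediate.
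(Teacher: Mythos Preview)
Your argument is correct in all three parts. The paper itself does not supply a proof of this lemma: it is introduced as ``well-known results from elementary number theory'' and stated without justification. Your treatment is the standard one --- congruence modulo $3$ for (a), the Euclidean-algorithm recursion on exponents for (b), and the sandwich via $2^k+1 \mid 2^{2k}-1$ for (c) --- and each step is sound, including the observation that $\gcd(k,m)=1$ with $m$ even forces $k$ odd and hence $\gcd(2k,m)=2$.
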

	
\section{Known classes of APN functions}
\label{sec:known_classes}
	In this section, we give a short overview over the currently known APN functions. We will additionally motivate why we choose the Pott-Zhou APN functions out of all the known APN functions to establish a lower bound on the number of APN functions.\par
	
	In \autoref{tab:powerfunctions}, we present the known APN power functions. 
	\begin{table}[]
		\centering
		\caption{List of known APN power functions $x \mapsto x^d$.}
		\label{tab:powerfunctions}
		\begin{tabularx}{\textwidth}{XXX}
			\hline
			&Exponents $d$	&Conditions\rule[-.5em]{0em}{1.5em}\\\hline
			Gold functions		&$2^i+1$		&$\gcd(i,n) = 1,\ i \le \lfloor \frac{n}{2} \rfloor$\rule[-.5em]{0em}{1.5em}\\
			Kasami functions		&$2^{2i}-2^i+1$	&$\gcd(i,n) = 1,\ i \le \lfloor \frac{n}{2} \rfloor$\rule[-.5em]{0em}{0em}\\
			Welch function		&$2^k+3$		&$n = 2k+1$\rule[-.5em]{0em}{0em}\\
			Niho function		&$2^k+2^{\frac{k}{2}}-1$, $k$ even	& $n=2k+1$\rule[-.5em]{0em}{0em}\\
			&$2^k+2^{\frac{3k+1}{2}}-1$, $k$ odd	&	$n=2k+1$\rule[-.5em]{0em}{0em}\\
			Inverse function		&$2^{2k}-1$		&$n=2k+1$\rule[-.5em]{0em}{0em}\\
			Dobbertin function	&$2^{4k} + 2^{3k} + 2^{2k} + 2^{k}-1$	&$n=5k$\rule[-.5em]{0em}{0em}\\\hline
		\end{tabularx}
	\end{table}
	This list is conjectured to be complete. Studying the functions from this list, it is obvious that none of these classes provides plenty of inequivalent functions as there are simply not enough possible choices for the relevant parameters. Hence, APN power functions are not well suited to establish a good lower bound on the total number of inequivalent APN functions. Nevertheless, APN power functions and their equivalence relations are very well studied. It is well known that the classes in \autoref{tab:powerfunctions} are in general CCZ-inequivalent. Moreover, it is, for example, known that Gold functions are inequivalent for different values of $i$. In \autoref{sec:Gold_APN}, we will take a careful look at the equivalence relations between distinct Gold functions as they will play an important role in the proof of our main theorem. \par

	As far as non-power APN functions are concerned, the situation becomes much less clear than for power functions. Several infinite families of non-power APN functions have been found, but not much is known about their equivalence relations. This includes equivalence relations both between functions from different classes as well as between functions coming from the same class. Recently, \textcite{budaghyan2019} actually reduced the number of known classes of non-power APN functions by proving that several of them coincide. The authors present an updated list \cite[Table~3]{budaghyan2019} of known quadratic APN functions that are CCZ-inequivalent to power functions which contains nine distinct classes.\par
	
	In this paper, we focus on the family~(F10) from this list. It was introduced in 2013 by Pott and the second author~\cite{zhou2013}. In \autoref{th:ZhouPottAPN}, we restate their construction in bivariate representation, which was also used in the original paper. In the list by \textcite{budaghyan2019}, the function is given in univariate polynomial representation.
	
	\begin{theorem}[{\cite[Corollary~2]{zhou2013} and \cite[Proposition~3.5]{anbar2019}}]
	\label{th:ZhouPottAPN}
		Let $m$ be even and let $k,s$ be integers, $0 \le k,s \le m$, such that $k$ is coprime to $m$. Let $\alpha \in \Fpm^*$. The function $f_{k,s,\alpha}:\Fptwom \to \Fptwom$ defined as
		\[
			f_{k,s,\alpha}(x,y) = \left(x^{2^k+1} + \alpha y^{(2^k+1)2^s},\ xy\right)
		\]
		is APN if and only if $s$ is even and $\alpha$ is a non-cube.
	\end{theorem}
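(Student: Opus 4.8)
\noindent
The plan is to apply the standard criterion that a quadratic function is APN, analyse the resulting derivative equations case by case, and reduce the one nontrivial case to a statement about a cubic character on a hyperplane. Write $f := f_{k,s,\alpha}$. Since $f$ is quadratic, it is APN if and only if for every nonzero $(a,b) \in \Fptwom$ the $\F_2$-linear map $(x,y) \mapsto f\bigl((x,y)+(a,b)\bigr) + f(x,y) + f(a,b) + f(0,0)$ has kernel exactly $\{(0,0),(a,b)\}$. Expanding $(x+a)^{2^k+1}$ binomially, and raising the relevant summands to the $2^s$-th power, this kernel is seen to be the solution set of the homogeneous system
\begin{align*}
	a\, x^{2^k} + a^{2^k} x + \alpha b^{2^s} y^{2^{k+s}} + \alpha b^{2^{k+s}} y^{2^s} &= 0,\\
	b\, x + a\, y &= 0.
\end{align*}
First I would dispose of the degenerate cases. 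If $b = 0$ and $a \neq 0$, the second equation gives $y=0$ and the first reduces to $a x^{2^k} = a^{2^k} x$, i.e.\ $(x/a)^{2^k-1}=1$; since $\gcd(2^k-1,2^m-1) = 2^{\gcd(k,m)}-1 = 1$ by \autoref{lem:num_theory}(b) and the coprimality of $k$ and $m$, this forces $x \in \{0,a\}$. The case $a=0$, $b\neq 0$ is identical after the substitutions $Y = y^{2^s}$, $B = b^{2^s}$. Hence in both cases the kernel is automatically $\{(0,0),(a,b)\}$, and APN-ness is decided entirely by the case $a,b\neq 0$.

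So assume $a,b\neq 0$. Substituting $x = (a/b)\,y$ from the second equation and then $y = bw$ with $w \in \Fpm$, the first equation becomes, after cancelling the nonzero factor $(b^{2^k+1})^{2^s}$ and setting $z := a/b^{2^s}$ and $T := w^{2^k}+w$,
\[
	z^{2^k+1}\,T = \alpha\, T^{2^s}.
\]
The two trivial kernel elements $(0,0)$ and $(a,b)$ now correspond exactly to $w \in \F_2$, equivalently to $T = 0$ (indeed $w^{2^k}+w=0 \iff w\in\F_2$, since $\gcd(k,m)=1$). As $(a,b)$ ranges over $\Fpm^*\times\Fpm^*$ so does $(z,b)$, and $b$ has disappeared from the equation; dividing through by $T\neq 0$, it follows that $f$ is APN if and only if there is no pair $(z,w) \in \Fpm^* \times (\Fpm\setminus\F_2)$ with $z^{2^k+1} = \alpha\,(w^{2^k}+w)^{2^s-1}$. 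By \autoref{lem:num_theory}(c) the image of $z\mapsto z^{2^k+1}$ is exactly the group $C\leq\Fpm^*$ of nonzero cubes, so
\[
	f \text{ is APN} \quad\Longleftrightarrow\quad \alpha\,(w^{2^k}+w)^{2^s-1}\notin C \ \text{ for every } w\in\Fpm\setminus\F_2 .
\]

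To evaluate this I would bring in the cubic residue character $\chi$ of $\Fpm^*$, a multiplicative homomorphism onto the complex cube roots of unity with $\ker\chi = C$, which is nontrivial because $3\mid 2^m-1$ by \autoref{lem:num_theory}(a). For $T\in\Fpm^*$ one has $\chi\bigl(T^{2^s-1}\bigr) = \chi(T)^{2^s-1} = \chi(T)^{(2^s-1)\bmod 3}$, and $2^s\equiv 1\pmod 3$ for $s$ even while $2^s\equiv 2\pmod 3$ for $s$ odd; hence $\chi(T^{2^s-1}) = 1$ when $s$ is even and $\chi(T^{2^s-1}) = \chi(T)$ when $s$ is odd. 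For $s$ \emph{even} the condition above is therefore independent of $w$ and reads $\chi(\alpha)\neq 1$, i.e.\ $\alpha$ is a non-cube (note $\Fpm\setminus\F_2\neq\emptyset$), which proves the statement in this case. For $s$ \emph{odd} the condition becomes $\chi(w^{2^k}+w)\neq\chi(\alpha)^{-1}$ for all $w\in\Fpm\setminus\F_2$; writing $V := \{w^{2^k}+w : w\in\Fpm\}$, which — being contained in $\ker\bigl(\mathrm{Tr}_{\Fpm/\F_2}\bigr)$ and having the same cardinality $2^{m-1}$ when $\gcd(k,m)=1$ — equals that hyperplane, we see that $f$ is APN precisely when $\chi$ omits the value $\chi(\alpha)^{-1}$ on $V\setminus\{0\}$.

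The remaining task — showing that $\chi$ takes every cube root of unity as a value on $V\setminus\{0\}$, so that for odd $s$ the function $f$ is never APN — is the one genuinely nontrivial point, and I expect it to be the main obstacle. I would settle it with a Gauss-sum estimate: expanding the indicator of $\{\mathrm{Tr}_{\Fpm/\F_2}(T)=0\}$ into additive characters and the indicator of $\{\chi(T)=\zeta_0\}$ into powers of $\chi$, one finds that for each cube root of unity $\zeta_0$ the number of $T\in V\setminus\{0\}$ with $\chi(T)=\zeta_0$ equals $\tfrac16\bigl(2^m-2+\zeta_0^{-1}g_1+\zeta_0^{-2}g_2\bigr)$ for Gauss sums $g_1,g_2$ of absolute value $2^{m/2}$, hence is at least $\tfrac16\bigl(2^m-2-2^{m/2+1}\bigr)>0$ for every even $m\geq 4$. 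Together with the even-$s$ analysis this yields the stated equivalence for $m\geq 4$; the one remaining small even value $m=2$ is settled by a direct computation. Apart from this character-sum input, the argument is merely bookkeeping with the derivative system and repeated use of \autoref{lem:num_theory}.
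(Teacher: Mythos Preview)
The paper does not actually prove \autoref{th:ZhouPottAPN}; it merely cites it, attributing sufficiency to \cite{zhou2013} and necessity to \cite{anbar2019}. So there is no in-paper proof to compare your attempt against. That said, your argument is correct and self-contained. The reduction of the APN condition to the single equation $z^{2^k+1}=\alpha T^{2^s-1}$ with $z\in\Fpm^*$ and $T$ ranging over the nonzero elements of the trace-zero hyperplane is exactly the right normalisation, and the parity split via the cubic character $\chi$ is clean: for even $s$ one has $3\mid 2^s-1$, so $T^{2^s-1}$ is always a cube and the criterion collapses to ``$\alpha$ non-cube''; for odd $s$ the question is whether $\chi$ misses a value on $V\setminus\{0\}$, and your Gauss-sum count $N_{\zeta_0}\ge\tfrac16(2^m-2-2^{m/2+1})>0$ for $m\ge4$ rules this out.

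One small caveat: your closing remark that ``$m=2$ is settled by a direct computation'' deserves a second look. For $m=2$ the exponent $(2^k+1)2^s$ reduces modulo $2^m-1=3$ to the same value for every $s$, so $f_{1,0,\alpha}$, $f_{1,1,\alpha}$, and $f_{1,2,\alpha}$ define the \emph{same} function on $\F_4^2$. In particular the function with $s=1$ is APN whenever $\alpha$ is a non-cube, so the ``only if'' direction as literally stated fails at $m=2$; this is a harmless degeneracy of the parametrisation rather than a flaw in your argument, but the direct check will not confirm the statement as written.
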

	
	Pott and the second author~\cite{zhou2013} showed that the restrictions on the parameters $s$ and $\alpha$ in \autoref{th:ZhouPottAPN}, namely on $s$ to be even and on $\alpha$ to be a non-cube, are sufficient for the function to be APN. It was recently proved by \textcite{anbar2019} that these conditions are also necessary. In \autoref{lem:TrivialEquivalences}, we will show that if $k$ and $s$ are fixed, the functions $f_{k,s,\alpha}$ are linearly equivalent for different choices of $\alpha$. Thus, we will omit the subscript $\alpha$ in the future and simply denote the Pott-Zhou APN~function by $f_{k,s}$.
	
	We chose the Pott-Zhou APN functions to study the equivalence problem for several reasons: first, in comparison to the other functions from the list by \textcite[Table~3]{budaghyan2019}, the family from \autoref{th:ZhouPottAPN} is remarkable as it depends on two parameters---recall that $\alpha$ is irrelevant. Moreover, both those parameters $k$ and $s$ are integers, and they have to meet conditions that are relatively easy to handle. Second, in the first coordinate function of the bivariate representation of~$f_{k,s}$, the Gold function~$x \mapsto x^{2^k+1}$ occurs twice, and Gold functions are well-studied. Third, Pott and the second author~\cite{zhou2013} showed that $f_{k,s}$ is a planar function on the finite field $\F_{p^{2m}}$, where $p$ is odd. Most importantly, the authors solved the equivalence problem for these planar functions and thereby gave us a starting point to solve our equivalence problem for the case $p=2$.\par
	For all those reasons, the Pott-Zhou APN family seems to be destined as a candidate to establish a lower bound on the total number of inequivalent APN functions.

\section{On the equivalence of Gold APN functions}
\label{sec:Gold_APN}
	
	Before we prove our main theorem in \autoref{sec:ZP_APN}, we state a well-known result about the equivalence of Gold APN functions in \autoref{th:GoldAPNs}. We present a new proof for this result which allows us to determine the precise shape of the equivalence mappings of Gold APN functions. We will need these equivalence mappings for the proof of \autoref{th:ZP_inequivalence1}. Note that Gold functions are quadratic, hence, by \autoref{lem:CCZ_EA} and \autoref{lem:AutomorphismGroup}, two Gold functions are CCZ-equivalent if and only if they are EA-equivalent, and their automorphism groups under CCZ- and EA-equivalence are the same.\par
	
	Our new proof shows that, for $m \ge 5$, the automorphisms of Gold APN functions are monomials. The case $m=4$ will be considered separately in \autoref{lem:GoldAPNs_m=4}. Note that for a Gold APN function $x \mapsto x^{2^k+1}$ on $\F_{2^m}$, it is easy to see that it is linearly equivalent to the function $x \mapsto x^{2^{-k}+1}$. Hence, we will only consider Gold APN functions with $k < \frac{m}{2}$.
	
	\begin{theorem}
	\label{th:GoldAPNs}
		Let $m \ge 5$, and let $k,\ell$ be integers coprime to $m$ such that $0 < k,\ell < \frac{m}{2}$. Two Gold APN functions $f,g \colon \Fpm \to \Fpm$ where
		\begin{align*}
			f(x) = x^{2^k+1} \qquad\text{and}\qquad g(x) = x^{2^\ell+1}
		\end{align*}
		are CCZ-equivalent if and only if $k = \ell$. In this case, the functions are linearly equivalent, and the equation $f(L(x)) = N(g(x))$ holds for all $x \in \Fpm$ if and only if $L(X)$ and $N(X)$ are linearized monomials of the shapes $L(X) = a_u X^{2^u}$ and $N(X) = a_u^{2^k+1} X^{2^u}$.
	\end{theorem}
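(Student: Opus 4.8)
The plan is to reduce to EA-equivalence and then perform a rigid comparison of polynomial coefficients. Since Gold functions are quadratic, \autoref{prop:yoshiara} and \autoref{lem:CCZ_EA} let me replace CCZ-equivalence by EA-equivalence, so I start from an identity $f(L(x))=N(g(x))+M(x)$ with affine permutations $L,N$ and an affine map $M$. Absorbing constants into input/output shifts, I take $L,N$ linearized, write $L(X)=\sum_{i\in\Z/m\Z}a_iX^{2^i}$, $N(X)=\sum_{j\in\Z/m\Z}n_jX^{2^j}$, and compare only the algebraic-degree-$2$ parts:
$$
\sum_{i,j}a_i^{2^k}a_j\,X^{2^{i+k}+2^j}=\sum_j n_j\,X^{2^{j+\ell}+2^j},
$$
the term $M$ contributing only to degree $\le 1$. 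Since $m\ge 5$, no exponent reduces modulo $2^m-1$, so the monomials $X^{2^p+2^q}$ with $p\neq q$ in $\Z/m\Z$ are pairwise distinct (and distinct from all linearized monomials); reading off the coefficient of each such monomial gives, for every unordered pair $\{p,q\}$ with $p\neq q$,
$$
a_{p-k}^{2^k}a_q+a_{q-k}^{2^k}a_p=
\begin{cases}
n_q & \text{if } p-q\equiv\ell\pmod m,\\
n_p & \text{if } q-p\equiv\ell\pmod m,\\
0 & \text{otherwise}.
\end{cases}
$$

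The heart of the proof is to show that the support $S=\{i:a_i\neq 0\}$ of $L$ is a singleton. As $\gcd(k,m)=1$, addition by $k$ is a single $m$-cycle on $\Z/m\Z$, so if $S\neq\Z/m\Z$ there is $r\in S$ with $r+k\notin S$; putting $p=r+k$ in the system, the second summand vanishes while (as $q\in S$ but $r+k\notin S$) the first does not, so every $q\in S$ must satisfy $q\equiv r+k\pm\ell\pmod m$ and thus $|S|\le 2$. If instead $S=\Z/m\Z$, then the ``otherwise'' equations give $a_{p-k}^{2^k}/a_p=a_{q-k}^{2^k}/a_q$ whenever $p-q\not\equiv\pm\ell\pmod m$; the graph on $\Z/m\Z$ in which $p\sim q$ precisely when $p-q\not\equiv\pm\ell\pmod m$ is the complement of an $m$-cycle (because $\gcd(\ell,m)=1$), which is connected for $m\ge 5$, so $a_{i-k}^{2^k}=c\,a_i$ for a constant $c$ and all $i$, and substituting this into the ``$p-q\equiv\ell$'' equations collapses the left-hand side to $0$ in characteristic $2$, forcing all $n_j=0$ and contradicting that $N$ is a permutation. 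This leaves $|S|=2$: writing $S=\{u,v\}$ with $u+k\notin S$ after relabelling, the substitution above pins $\{u,v\}=\{u+k-\ell,u+k+\ell\}$, so $k=\ell$ (the alternative $k\equiv-\ell\pmod m$ is ruled out by $0<k,\ell<\frac{m}{2}$) and $v=u+2k$; but expanding $L(X)^{2^k+1}$ then produces the monomial $X^{2^{u+3k}+2^u}$ with nonzero coefficient $a_ua_v^{2^k}$, whose exponent pair $\{u+3k,u\}$ has difference $3k\not\equiv\pm k\pmod m$ for $m\ge 5$ and therefore has no counterpart on the right-hand side -- a contradiction. Hence $|S|=1$, i.e.\ $L(X)=a_uX^{2^u}$.

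Once $L(X)=a_uX^{2^u}$, the left-hand side is $a_u^{2^k+1}X^{2^{u+k}+2^u}$, a single degree-$2$ monomial whose exponent pair has difference $k$, whereas every monomial on the right-hand side has exponent pair of difference $\ell$; hence $k\equiv\pm\ell\pmod m$, and $0<k,\ell<\frac{m}{2}$ gives $k=\ell$, which is the ``only if'' direction. The ``if'' direction is trivial since $k=\ell$ makes $f=g$, so the functions are even linearly equivalent. For the last claim, with $k=\ell$ a pair of linear permutations $L,N$ satisfies $f(L(x))=N(g(x))$ exactly when, by the support argument, $L(X)=a_uX^{2^u}$, and then matching the coefficient of $X^{2^{u+k}+2^u}$ forces $n_u=a_u^{2^k+1}$ and $n_j=0$ for $j\neq u$, i.e.\ $N(X)=a_u^{2^k+1}X^{2^u}$; conversely these obviously satisfy the equation. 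I expect the support analysis to be the main obstacle -- in particular, hitting on the single substitution $p=r+k$ that collapses $|S|$ to at most $2$, and using connectivity of the complement of an $m$-cycle, which is exactly what breaks at $m=4$ (handled separately in \autoref{lem:GoldAPNs_m=4}) and forces the hypothesis $m\ge 5$.
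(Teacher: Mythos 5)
Your proof is correct and takes essentially the same route as the paper's: both reduce to EA-equivalence via Yoshiara, compare coefficients of $x^{2^i+2^j}$ to get the identical system of equations, use the vanishing equations to restrict the support of $L$, rule out full support via a constant ratio that collapses to $0$ in characteristic $2$, and kill the two-term case with the uncancellable exponent pair of difference $3k$. The only differences are organizational: you case-split on $|S|$ and invoke connectivity of the complement of an $m$-cycle, where the paper case-splits on which of $a_{u-k},a_u$ vanishes and walks along an explicit $+k$-chain.
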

	\begin{proof}
		If $k=\ell$, the functions $f$ and $g$ are clearly EA-equivalent and thereby CCZ-equivalent. We will show that, if the APN functions $f$ and $g$ are EA-equivalent, it follows that $k=\ell$. Assume that $f$ and $g$ are EA-equivalent. Then there exist three linearized polynomials~$L(X),N(X),M(X) \in \Fpm[X]$, where $N(X)$ and $L(X)$ are permutation polynomials, such that
		\begin{align}
		\label{eq:Gold}
			(L(x))^{2^k+1} = N(x^{2^\ell+1}) + M(x)
		\end{align}
		for all $x \in \Fpm$. Let $x \in \Fpm$. By writing $L(X) = \sum_{i=0}^{m-1}a_iX^{2^i}$ and $N(X) = \sum_{i=0}^{m-1} b_iX^{2^i}$ and rearranging the left side of~\cref{eq:Gold}, we obtain
		\begin{align}
		\label{eq:Gold1}
			\sum_{i=0}^{m-1} a_{i-k}^{2^k} a_i x^{2^{i+1}} + \sum_{\substack{i,j=0,\\j \ne i+k}}^{m-1} a_i^{2^k} a_j x^{2^{i+k}+2^j} = \sum_{i=0}^{m-1} b_i x^{(2^{\ell}+1)2^i} + M(x).
		\end{align}
		Since the first sum on the left-hand side of \cref{eq:Gold1} is a linearized polynomial and the second sum on the left-hand side does not include any linear parts, it follows that 
		\begin{align}
		\label{eq:Gold_M}
			M(X) = \sum_{i=0}^{m-1} a_{i-k}^{2^k} a_i X^{2^{i+1}}.
		\end{align}
		We store this information and will not consider $M(X)$ in the following steps. Rewrite the second sum on the left-hand side of \cref{eq:Gold1} as
		\[
			\sum_{0 \le i < j \le m-1} \left(a_{i-k}^{2^k} a_j + a_{j-k}^{2^k}a_i\right) x^{2^i+2^j},
		\]
		where the subscripts of $a$ are calculated modulo $m$. From \cref{eq:Gold1}, it follows that 
		\begin{align}
			\label{eq:GoldCoeff1}
			a_{i-k}^{2^k} a_{i+\ell} + a_{i+\ell-k}^{2^k} a_i	&= b_i	&&\text{for all } i, \\
			\label{eq:GoldCoeff2}
			a_{i-k}^{2^k} a_j + a_{j-k}^{2^k} a_i 		&= 0	&&\text{for } j \ne i, i \pm \ell.	
		\end{align}
		Since $L(X)$ is a permutation polynomial, assume $b_u \ne 0$ for some $u \in \{0,\dots,m-1\}$. Then, by \cref{eq:GoldCoeff1}, $a_{u-k}$ and $a_u$ cannot be zero at the same time. We will consider the two cases that, first, exactly one of $a_{u-k}$ and $a_u$ is nonzero and, second, both $a_{u-k}$ and $a_u$ are nonzero.
		
		\paragraph{Case 1.} Assume $a_{u-k} \ne 0$ and $a_u =0$. We will show that, in this case, $f$ and $g$ are equivalent if $k = \ell$ and that $L(X)$ and $N(X)$ are monomials. For the case $a_u \ne 0$ and $a_{u-k} = 0$, an analogous result can be obtained by following the same steps as in the case presented.\par
		
		If $a_{u-k} \ne 0$ and $a_u=0$, then \eqref{eq:GoldCoeff2} becomes
		\[
			a_{u-k}^{2^k}a_j + a_{j-k}^{2^k}a_u = a_{u-k}^{2^k}a_j = 0 \qquad \text{for } j \ne u, u \pm \ell.
		\]
		Consequently, $a_j = 0$ for $j \ne u, u \pm \ell$. So, only $a_{u-k}, a_{u-\ell}, a_{u+\ell}$ can be nonzero and \cref{eq:Gold1} is now
		\begin{align}
		\label{eq:GoldCase1}
			\left(a_{u-k}x^{2^{u-k}} + a_{u-\ell}x^{2^{u-\ell}} + a_{u+\ell}x^{2^{u+\ell}}\right)^{2^k+1} = \sum_{i=0}^{m-1} b_i x^{2^i(2^\ell+1)}.
		\end{align}
		The left-hand side of \cref{eq:GoldCase1} contains the summands
		\begin{align*}
			a_{u-k}^{2^k+1}x^{2^{u-k}(2^k+1)},&& a_{u-\ell}^{2^k+1}x^{2^{u-\ell}(2^k+1)}&& \text{and} && a_{u+\ell}^{2^k+1}x^{2^{u+\ell}(2^k+1)}.
		\end{align*}

		For $\ell \ne k$, it follows from the condition $0 < k, \ell < \frac{m}{2}$, that $u-k \not\equiv u \pm \ell \pmod{m}$. Hence, the sum of $a_{u-k}^{2^k+1}x^{2^{u-k}(2^k+1)}$ and one of the other two expressions cannot be $0$, which means $a_{u-k}^{2^k+1}x^{2^{u-k}(2^k+1)}$ cannot be canceled from the left-hand side of \cref{eq:GoldCase1}. However, as it cannot occur on the right-hand side, we obtain $a_{u-k} = 0$ which contradicts our assumption.\par

		For $\ell = k$, however, we obtain 
		\begin{align}
		\label{eq:GoldCase1a}
			\left(a_{u-k}x^{2^{u-k}} + a_{u+k}x^{2^{u+k}}\right)^{2^k+1} = \sum_{i=0}^{m-1} b_i x^{2^i(2^k+1)}
		\end{align}
		from \cref{eq:Gold1}, whose left-hand side can be written as
		\[
			a_{u-k}^{2^{k+1}} x^{2^{u-k}(2^k+1)} + a_{u+k}^{2^{k+1}} x^{2^{u+k}(2^k+1)} +
			a_{u-k}^{2^k} a_{u+k} x^{2^u(2^k+1)} +
			a_{u+k}^{2^k} a_{u-k}x^{2^{u-k}(2^{3k}+1)}.
		\]
		As $m \ge 5$, we have $3k \not\equiv \pm k \pmod{m}$, hence, $x^{2^{u-k}(2^{3k}+1)}$ cannot be represented in the form $x^{2^i(2^k+1)}$. Consequently, its coefficient $a_{u+k}^{2^k} a_{u-k}$ has to be $0$. Since $a_{u-k} \ne 0$, this implies that $a_{u+k} = 0$ and $L(X) = a_{u-k}X^{2^{u-k}}$ is a monomial. Thus, also $N(X)$ is a monomial. It is uniquely determined by $L(X)$ and can be written as $N(X) = a_{u-k}^{2^k+1}X^{2^{u-k}}$. Furthermore, it follows from \cref{eq:Gold_M} that $M(X)=0$.
		
		\paragraph{Case 2.} Assume both $a_u, a_{u-k} \ne 0$. First, let $k \ne \ell$. Since $0 < k,\ell < \frac{m}{2}$, it follows that $u-k \not\equiv u-\ell \pmod{m}$ and $u+\ell-k \not\equiv u \pmod{m}$. Hence, we consider \cref{eq:GoldCoeff2} for $i=u$ and $j = u-k$:
		\[
			a_{u-k}^{2^k+1} + a_{u-2k}^{2^k}a_u = 0.
		\]
		Consequently, $a_{u-2k} \ne 0$. Now, by considering \cref{eq:GoldCoeff2} for $(i,j) = (u-k, u-2k), (u-2k, u-3k) \dots, (u-(m-1)k, u)$ and recalling that $\gcd(k,m) = 1$, we obtain $a_i \ne 0$ for all $i = 0, \dots, m-1$. Moreover, it follows from this sequence of equations that the quotient
		\[
			\frac{a_{i-k}^{2^k}}{a_i} = \frac{a_{u-k}^{2^k}}{a_u} =: \Delta 
		\]
		is constant for all $i = 0, \dots, m-1$. However, consider \cref{eq:GoldCoeff1} for $i=u$:
		\begin{align}
		\label{eq:i=u}
			a_{u-k}^{2^k} a_{u+\ell} + a_{u+\ell-k}^{2^k} a_u	&= b_u.
		\end{align}
		If we divide \cref{eq:i=u} by the nonzero $a_ua_{u+\ell}$, we obtain
		\[
			\frac{a_{u-k}^{2^k}}{a_u} + \frac{a_{u+\ell-k}^{2^k}}{a_{u+\ell}} =  \frac{b_u}{a_ua_{u+\ell}}
		\]
		which is a contradiction as the left-hand side is $\Delta + \Delta = 0$ and the right-hand side is nonzero.
		
		Now, let $k=\ell$. In this case, \cref{eq:GoldCoeff1} becomes
		\begin{align}
		\label{eq:GoldCase2}
			a_{u-k}^{2^k}a_{u+k} + a_u^{2^k+1} = b_u
		\end{align}
		for $i = u$. We consider \cref{eq:GoldCoeff2} for $i = u-k$ and $j = u+k$:
		\[
			a_{u-2k}^{2^k}a_{u+k} + a_u^{2^k}a_{u-k} = 0.
		\]
		Recall that $a_{u-k},a_u \ne 0$, thus $a_{u-2k}, a_{u+k} \ne 0$. From additionally considering~\cref{eq:GoldCoeff2} for $(i,j) = (u-2k, u), (u-3k, u-k), \dots, (u, u+2k)$, it follows that $a_i \ne 0$ for all $i = 0, \dots, m-1$. Furthermore, we obtain from these equations that
		\[
			\frac{a_{u-ik}^{2^k}}{a_{u-(i+1)k}} =
			\begin{cases}
				\dfrac{a_{u}^{2^k}}{a_{u+k}} =: \Delta_1 & \text{for } i \text{ even},\rule[-1.8em]{0em}{0em}\\
				\dfrac{a_{u-k}^{2^k}}{a_{u}} =: \Delta_2 & \text{for } i \text{ odd}.
			\end{cases} 
		\]
		Note that $u-k\not\equiv u+3k \pmod{m}$ as $m \ge 5$. Hence, considering \cref{eq:GoldCoeff2} with $i = u$ and $j = u+3k$, we obtain that
		\[
			a_{u-k}^{2^k}a_{u+3k} + a_{u+2k}^{2^k}a_u = 0,
		\]
		which implies $\Delta_1 = \Delta_2 =: \Delta$. If we now divide \cref{eq:GoldCase2} by $a_u a_{u+k}$, we obtain the same kind of contradiction as for $k \ne \ell$. Hence, this second case does not provide additional solutions for $L(X),N(X)$ and $M(X)$.
	\end{proof}
	
	If $m=4$, some of the arguments used in the previous proof do not hold. In this case, there is only one Gold APN function with $k < \frac{m}{2}$, namely $f(x) = x^3$. However, unlike for $m \ge 5$, the automorphism group of $f$ does not only consist of monomials, as we show in the following lemma.
	\begin{lemma}
	\label{lem:GoldAPNs_m=4}
		The group $\Aut_L(f)$ of linear automorphisms of the unique Gold APN~function $f\colon\F_{2^4} \to \F_{2^4}$ that is defined as $f(x)=x^3$ consists of the linearized monomials from \autoref{th:GoldAPNs} together with the linearized polynomials
		\begin{align*}
	%	\label{eq:Goldm=4_1}
			L(X) &= a_1 X^2 + a_3 X^8	&\text{and}	&&N(X) &= a_3^2 a_1 X + a_1^3 X^2 + a_1^2 a_3 X^4 + a_3^3 X^8,\\
	%	\label{eq:Goldm=4_2}
			L(X) &= a_0 X + a_2 X^{4}	&\text{and}	&&N(X) &= a_0^3 X + a_0^2 a_2 X^2 + a_2^3 X^4 + a_2^2 a_0 X^8,
		\end{align*}
		for coefficients $a_1,\dots, a_4 \in \Fpm^*$ such that $\frac{a_1}{a_3}$ and $\frac{a_0}{a_2}$ are non-cubes. 
	\end{lemma}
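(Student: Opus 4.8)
The plan is to run the argument from the proof of \autoref{th:GoldAPNs} once more, but in the special case $m=4$, $k=\ell=1$, and to keep \emph{all} solutions instead of forcing $L$ to be a monomial; the extra solutions arise precisely because the relations $3k\not\equiv\pm k\pmod m$ and $u-k\not\equiv u+3k\pmod m$, used repeatedly when $m\ge 5$, break down for $m=4$ and $k=1$, where $3\equiv -1\pmod 4$. By definition, a pair of linearized polynomials $(L,N)$ lies in $\Aut_L(f)$ if and only if $L$ and $N$ are permutation polynomials of $\F_{2^4}$ with $L(x)^3=N(x^3)$ for all $x\in\F_{2^4}$. Writing $L(X)=\sum_{i=0}^{3}a_iX^{2^i}$ and $N(X)=\sum_{i=0}^{3}b_iX^{2^i}$, I would expand
\[
	L(x)^3 = L(x)^2L(x) = \sum_{i,j=0}^{3} a_i^{2}a_j\,x^{2^{i+1}+2^j}
\]
and compare it, residue by residue modulo $2^4-1=15$, with $N(x^3)=\sum_{i=0}^{3}b_i\,x^{3\cdot 2^i}$, whose monomials only carry the exponents $\{3,6,12,9\}$.

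The central bookkeeping step is to determine which residues $2^{i+1}+2^j\bmod 15$ occur: besides the four admissible exponents, only the four linear ones $2^0,2^1,2^2,2^3$ and the two exponents $5$ and $10$ appear. Demanding that the coefficient of every inadmissible monomial vanish yields $a_ia_{i+1}=0$ for all $i\in\Z/4\Z$ (from the linear terms) together with $a_0a_1^{2}+a_2a_3^{2}=0$ and $a_0^{2}a_3+a_1a_2^{2}=0$ (from $x^5$ and $x^{10}$). The first set of conditions says that the support $\{i:a_i\ne 0\}$ is an independent set of the $4$-cycle $0\!-\!1\!-\!2\!-\!3\!-\!0$, hence is $\emptyset$, a singleton, $\{0,2\}$ or $\{1,3\}$; in the last two cases the remaining two conditions hold automatically. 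The empty support gives $L=0$ and is excluded; a singleton support gives the monomial solutions of \autoref{th:GoldAPNs} with $k=1$; and the supports $\{0,2\}$ and $\{1,3\}$ give exactly the two families in the statement, with $N$ read off uniquely as the list of coefficients $b_i$ of $L(x)^3$.

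It then remains to decide when the resulting $L$ and $N$ are permutations. For $L$ this is quick: $a_0X+a_2X^4$ (respectively $a_1X^2+a_3X^8$) has a nonzero root exactly when $a_0/a_2$ (respectively $a_1/a_3$) is a cube, and since $\gcd(3,15)=3$ the cubes form a subgroup of $\F_{2^4}^{*}$ of index $3$, which gives the stated non-cube conditions. I expect the real obstacle to be showing that $N$ is then automatically a permutation: $N$ is a genuine four-term linearized polynomial, not a binomial, and $x\mapsto x^3$ is $3$-to-$1$ rather than bijective on $\F_{2^4}$, so there is no obvious inverse to exploit. My approach would be: since the support of $L$ lies in a single residue class modulo $2$, one has $L(\zeta x)=\zeta^{c}L(x)$ for a primitive cube root of unity $\zeta$ and a fixed $c\in\{1,2\}$, so $L(x)^3$ depends only on $x^3$; hence the restriction of $N$ to the set $C$ of cubes, with $|C|=6$, is the well-defined map $x^3\mapsto L(x)^3$, which is a bijection of $C$ onto $C$ because $L$ is bijective. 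If $N$ were not a permutation of $\F_{2^4}$, then $\dim_{\F_2}\ker N\ge 1$, so $|\mathrm{Im}\,N|\le 8$; since $C\subseteq\mathrm{Im}\,N$ and $|C|=6$, this forces $\ker N=\{0,v\}$ for a single $v\ne 0$, and the injectivity of $N$ on $C$ then forces $(C+v)\cap C=\emptyset$, i.e.\ $v\notin C+C$. A direct check, however, shows that the pairwise sums of $0$ and the five fifth roots of unity exhaust $\F_{2^4}$, i.e.\ $C+C=\F_{2^4}$, a contradiction; hence $N$ is a permutation. Finally, a short direct expansion confirms that every pair $(L,N)$ in the two displayed families satisfies $L(x)^3=N(x^3)$, so these pairs together with the monomials are exactly the elements of $\Aut_L(f)$.
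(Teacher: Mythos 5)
Your proof is correct, and it takes a cleaner route than the paper's at two points. The paper recycles the coefficient equations \cref{eq:GoldCoeff1}--\cref{eq:GoldCoeff2} from the proof of \autoref{th:GoldAPNs}, which were derived in the EA-setting where the linear part of $L(x)^3$ is absorbed into $M(X)$; as a result it must still dispose of the case where two cyclically adjacent coefficients of $L$ are nonzero, which it does by showing that all $a_i$ are then forced to be nonzero, enumerating $900$ candidate polynomials and checking that none is a permutation. You instead impose the linear-equivalence identity $L(x)^3=N(x^3)$ from the start, so the vanishing of the coefficients at the linear exponents gives $a_ia_{i+1}=0$ for all $i\in\Z/4\Z$ immediately, and the support of $L$ is an independent set of the $4$-cycle; this eliminates the paper's Case~2 in one line and reduces everything to the exponents $5$ and $10$ (your residue bookkeeping modulo $15$ is correct, and your two relations are exactly the paper's \cref{eq:Gold_m=4_0}). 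This is legitimate because the lemma is a statement about $\Aut_L(f)$, not $\Aut_{EA}(f)$; the only thing your argument does not reprove is the paper's implicit observation that the adjacent-support case also yields no EA-automorphisms, which is not part of the statement. The second improvement is that where the paper declares it ``routine to verify'' that $N$ is a permutation, you give an actual argument: $L(\zeta x)=\zeta^cL(x)$ for a primitive cube root of unity $\zeta$ and $c$ coprime to $3$, so $N$ restricted to the set $C$ of cubes is the bijection $x^3\mapsto L(x)^3$ of $C$, and a nontrivial kernel element $v$ would have to avoid $C+C$; since $C=\{0\}\cup\mu_5$ is a Sidon set in $(\F_{2^4},+)$, one has $C+C=\F_{2^4}$ and no such $v$ exists. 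Both finite checks you defer (the list of residues $2^{i+1}+2^j\bmod 15$ and $C+C=\F_{2^4}$) are true, so the proposal stands as a complete and somewhat more self-contained proof.
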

	\begin{proof}
		Let $m = 4$. Using the same notation as in the proof of \autoref{th:GoldAPNs}, we consider~\cref{eq:GoldCoeff1} and \cref{eq:GoldCoeff2} for $k = \ell = 1$. We obtain the following equations of type \cref{eq:GoldCoeff1}: 
		\begin{align}
		\label{eq:Gold_m=4_bi}
			a_3^2 a_1 + a_0^3 &= b_0,&
			a_0^2 a_2 + a_1^3 &= b_1,&
			a_1^2 a_3 + a_2^3 &= b_2,&
			a_2^2 a_0 + a_3^3 &= b_3.
		\end{align}
		Note that we now only have two equations of type \cref{eq:GoldCoeff2}, namely
		\begin{align}
		\label{eq:Gold_m=4_0}
			a_1^2 a_0 + a_3^2 a_2 &= 0& \text{and}&&	a_0^2 a_3 + a_2^2 a_1 &= 0.
		\end{align}
		Assume again that $b_u \ne 0$ for some $u \in \{0,\dots, 3\}$. We will distinguish the same cases as in the proof of \autoref{th:GoldAPNs}: 
		\paragraph{Case 1.} First, assume $a_{u-1} \ne 0$ and $a_u = 0$. As before, the case $a_{u} \ne 0$ and $a_{u-1} = 0$ can be treated analogously. If $a_{u-1} \ne 0$ and $a_u = 0$, it follows from \cref{eq:Gold_m=4_0} that $a_{u+1} \ne 0$ and $a_{u-2} = 0$. However, unlike in the proof of \autoref{th:GoldAPNs}, we do not obtain a contradiction from \cref{eq:GoldCase1a} now, as $x^{2^{3k}+1} = x^9$ can be written as $x^{2^{3k}(2^k+1)} = x^{2^3\cdot 3}$. Hence, the equation
		\[
			L(x)^3 = N(x^3)
		\] 
		holds not only for the linearized monomials from \autoref{th:GoldAPNs}, but also for the linearized polynomials
		\begin{align*}
			L(X) &= a_1 X^2 + a_3 X^8	&\text{and}	&&N(X) &= a_3^2 a_1 X + a_1^3 X^2 + a_1^2 a_3 X^4 + a_3^3 X^8
		\end{align*}
		if we choose $u = 0$ or $2$ and
		\begin{align*}
			L(X) &= a_0 X + a_2 X^{4}	&\text{and}	&&N(X) &= a_0^3 X + a_0^2 a_2 X^2 + a_2^3 X^4 + a_2^2 a_0 X^8
		\end{align*}
		if we choose $u=1$ or $3$.
		
		In the final step of this first case, we need to check under which conditions $L(X)$ and $N(X)$ are permutation polynomials. Since $L(X)$ and $N(X)$ are linearized, it is sufficient to show that $L(x) = 0$ and $N(x) = 0$ if and only if $x = 0$. For $x \ne 0$, the equation $L(x) = 0$ can be rearranged to $\frac{a_1}{a_3} = x^6$ and $\frac{a_0}{a_2} = x^3$, respectively. These equations have no solution if and only if $\frac{a_1}{a_3}$ and $\frac{a_0}{a_2}$ are non-cubes. It is routine to verify that $N(X)$ also is a permutation polynomial in these cases.\par
		
		\paragraph{Case 2.} Now, let both $a_{u-1},a_u \ne 0$. In this case, it follows from \cref{eq:Gold_m=4_0} that $a_1,\dots,a_4$ are nonzero, and that $\frac{a_1}{a_3}$ and $\frac{a_2}{a_0}$ have to be cubes satisfying $(\frac{a_1}{a_3})^2 = \frac{a_2}{a_0}$. Consequently, by~\cref{eq:Gold_m=4_bi}, the coefficients $b_1,\dots,b_4$ are also nonzero which implies $a_0^3 \ne a_3^2 a_1$. Taking all these conditions into consideration, we obtain 15 choices for $a_{1}$, five choices for $a_{3}$, twelve choices for $a_{0}$, and $a_2$ is finally uniquely determined by the other coefficients. Thus, we obtain a total of $900$ possible distinct polynomials~$L(X)$. However, it can be verified that none of them is a permutation polynomial. Hence, the second case does not provide additional solutions.
	\end{proof}
	
	From \autoref{th:GoldAPNs} and \autoref{lem:GoldAPNs_m=4}, we easily obtain the automorphism groups of Gold APN functions. These results were originally given by \textcite[Proposition~5]{berger1996} in a coding theory context. We restate their result to demonstrate how it can be derived from \autoref{th:GoldAPNs}.
	
	\begin{corollary}
	\label{cor:GoldAPN_AutomorphismGroup}
		Let $f$ be a Gold APN function on $\Fpm$. If $m \ge 5$, then $\Aut_L(f)$ is isomorphic to the general semi-linear group $\GamL(1,2^m)$ of degree $1$ over $\F_{2^m}$, and consequently
		\begin{align*}
			|\Aut_L(f)| &= m(2^m-1)& \text{and} &&|\Aut(f)| &= m 2^m (2^m-1).
		\end{align*}
		If $m = 4$, then
		\begin{align*}
			|\Aut_L(f)| &= 360& \text{and} &&|\Aut(f)| &= 5760.
		\end{align*}
	\end{corollary}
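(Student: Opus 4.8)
The plan is to read both automorphism groups directly off \autoref{th:GoldAPNs} and \autoref{lem:GoldAPNs_m=4}, and then to pass from $\Aut_L(f)$ to $\Aut(f)$ using \autoref{lem:CCZ_EA} and \autoref{lem:AutomorphismGroup}. Throughout I would identify an element of $\Aut_L(f)$ with a pair $(L,N)$ of linearized permutation polynomials of $\Fpm$ satisfying $f(L(x)) = N(f(x))$ for all $x \in \Fpm$, the group law being componentwise composition; the projection $(L,N) \mapsto L$ is then a group homomorphism into $\GL(m,2)$, and it is injective whenever $N$ is uniquely determined by $L$.

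Suppose first that $m \ge 5$ and write $f(x) = x^{2^k+1}$ with $0 < k < \frac{m}{2}$ coprime to $m$. I would apply \autoref{th:GoldAPNs} with $\ell = k$ and $g = f$: it tells us that $\Aut_L(f)$ consists exactly of the pairs $\bigl(a X^{2^u},\, a^{2^k+1}X^{2^u}\bigr)$ with $a \in \Fpm^*$ and $u \in \{0,\dots,m-1\}$. Here $N$ is determined by $L$, so the projection to the first component is injective, and its image is the set of all maps $x \mapsto a x^{2^u}$ on $\Fpm$, which is precisely the group $\GamL(1,2^m)$ of order $m(2^m-1)$. Hence $\Aut_L(f) \cong \GamL(1,2^m)$ and $|\Aut_L(f)| = m(2^m-1)$.

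Suppose next that $m = 4$; then $f(x) = x^3$ is the unique Gold APN function with $k < \frac{m}{2}$, and \autoref{lem:GoldAPNs_m=4} describes $\Aut_L(f)$ completely, so it suffices to count its elements. There are $m(2^m-1) = 4 \cdot 15 = 60$ linearized monomials. For the family $L(X) = a_1 X^2 + a_3 X^8$, the coefficient $a_3$ is free in $\F_{2^4}^*$, giving $15$ choices, while $a_1$ must be $a_3$ times a non-cube; since the cubes form the subgroup of index $3$ in the cyclic group $\F_{2^4}^*$ of order $15$, there are $10$ non-cubes and hence $15 \cdot 10 = 150$ such maps, and the family $L(X) = a_0 X + a_2 X^4$ contributes another $150$ by the identical count. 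Therefore $|\Aut_L(f)| = 60 + 150 + 150 = 360$.

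Finally, Gold functions are quadratic, so \autoref{lem:CCZ_EA} gives $\Aut(f) = \Aut_{EA}(f)$ and \autoref{lem:AutomorphismGroup} gives $\Aut_{EA}(f) = (\Fpm,+) \rtimes \Aut_L(f)$, whence $|\Aut(f)| = 2^m\,|\Aut_L(f)|$; substituting the two values computed above gives $|\Aut(f)| = m\,2^m(2^m-1)$ for $m \ge 5$ and $|\Aut(f)| = 16 \cdot 360 = 5760$ for $m = 4$. I expect the only delicate point to be the bookkeeping in the case $m = 4$: one must check that the two extra families in \autoref{lem:GoldAPNs_m=4} are pairwise disjoint and disjoint from the monomials, and that the several admissible values of $u$ appearing in that lemma's proof do not produce the same linearized polynomial twice. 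Once that is settled, the non-cube count and the total $360$ are immediate, and everything else is a direct translation of results already established.
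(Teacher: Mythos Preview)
Your proposal is correct and follows essentially the same approach as the paper: both read off the admissible pairs $(L,N)$ from \autoref{th:GoldAPNs} and \autoref{lem:GoldAPNs_m=4}, count them (obtaining $m(2^m-1)$ monomial pairs plus $2\cdot 15\cdot 10 = 300$ extra pairs when $m=4$), and then pass to $\Aut(f)$ via \autoref{lem:CCZ_EA} and \autoref{lem:AutomorphismGroup}. Your explicit remark that the projection $(L,N)\mapsto L$ is injective, and your caveat about disjointness of the three families in the $m=4$ case, are details the paper leaves implicit but which are indeed easy to check.
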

	\begin{proof}
		In \autoref{th:GoldAPNs}, we have shown that equivalence mappings of Gold APN functions can be described by polynomials of the shape $L(X) = a_uX^{2^u}$ and $N(X) = a_u^{2^k+1}X^{2^u}$. We count the number of such polynomials. For all $m \ge 4$, there exist $m (2^m-1)$ distinct monomials $L(X)$, as there are $m$ distinct choices for $u$ and $2^m-1$ possibilities to choose~$a_{u}$. The monomial $N(X)$ is then uniquely determined by $L(X)$. Hence, clearly, $\Aut_L(f)$ is isomorphic to the semilinear group $\GamL(1,2^m)$ and $|\Aut_L(f)| = m (2^m-1)$.\par
		If $m=4$, in addition to the $4 \cdot (2^4-1) = 60$ monomials from \autoref{th:GoldAPNs}, we have the linearized polynomials presented in \autoref{lem:GoldAPNs_m=4}. For both possible shapes of $L(X)$ and $N(X)$ from \autoref{lem:GoldAPNs_m=4}, we have 15 possible choices for $a_{u-k}$ resulting in ten choices for $a_{u+k}$. This gives us a total number of $300 = 2 \cdot 15 \cdot 10$ distinct pairs of $L(X)$ and $N(X)$. Adding this number to the number of distinct monomials, we obtain $|\Aut_L(f)| = 60 + 300 = 360$.\par
		In both of the above cases, the automorphism group $\Aut(f)$ is obtained from $\Aut_L(F)$ by using \autoref{lem:AutomorphismGroup} in combination with \autoref{lem:CCZ_EA}.
	\end{proof}
	\textcite{berger1996} actually showed that for $m=4$, the automorphism group $\Aut_L(f)$ is isomorphic to the general semilinear group $\GamL(2,4)$. 
	
\section{On the equivalence of Pott-Zhou APN functions}
\label{sec:ZP_APN}

	In this section, we study the equivalence problem of the Pott-Zhou APN functions on~$\Fptwom$, where $m$ is even, which were introduced in \autoref{th:ZhouPottAPN}. We will answer the question for which values of the parameters $k,s,\alpha$ two Pott-Zhou APN functions $f_{k,s,\alpha}$ are CCZ-inequivalent. Our results will allow us to establish a lower bound on the total number of CCZ-inequivalent APN functions on $\Fptwom$, where $m$ is even.\par
	
	Note that, like the Gold APN functions in \autoref{sec:Gold_APN}, Pott-Zhou APN functions are quadratic. Hence, by \autoref{lem:CCZ_EA} and \autoref{lem:AutomorphismGroup}, two Pott-Zhou APN functions are CCZ-equivalent if and only if they are EA-equivalent, and their automorphism groups under CCZ- and EA-equivalence are the same. We begin by proving some trivial equivalences:
	\begin{lemma}
	\label{lem:TrivialEquivalences}
		Let $m$ be an even integer. Let $k,\ell$ be integers coprime to $m$ such that $0 < k,\ell < m$, and let $s,t$ be even integers with $0 \le s,t \le m$. Let $\alpha, \beta \in \Fpm^*$ be non-cubes. The two APN functions $f_{k,s,\alpha},f_{\ell,t,\beta} \colon \Fptwom \to \Fptwom$ from \autoref{th:ZhouPottAPN} are linearly equivalent
		\begin{enumerate}[label=(\alph*),ref=(\alph*)]
			\item\label{item:non-cubics} if $(k,s) = (\ell,t)$, no matter which non-cubes $\alpha$ and $\beta$ we choose,
			\item\label{item:k=-l,s=-t} if $k \equiv \pm \ell \pmod{m}$ and $s \equiv \pm t\pmod{m}$.
		\end{enumerate}
	\end{lemma}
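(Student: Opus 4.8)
The plan is to prove part \ref{item:non-cubics} first and then use it as a working tool in part \ref{item:k=-l,s=-t}: once we know that the non-cube parameter is irrelevant up to linear equivalence, we may normalise it freely in the second argument. Everything rests on a single arithmetic observation, read off from \autoref{lem:num_theory}: since $2^m-1$ is odd and $k$ is coprime to the even integer $m$, we have $\gcd\big((2^k+1)2^s,\,2^m-1\big)=\gcd(2^k+1,2^m-1)=3$. Hence the map $c\mapsto c^{(2^k+1)2^s}$ on $\Fpm^*$ has image exactly the group of cubes, while every Frobenius power $c\mapsto c^{2^j}$ is a bijection of $\Fpm^*$ and therefore permutes the set of non-cubes; since $m$ is even we have $2^{m-1}\equiv-1\pmod 3$, so $c\mapsto c^{2^{m-1}}$ in particular interchanges the two cosets of non-cubes. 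These are the only facts about $\Fpm^*$ that will enter the proof.

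For \ref{item:non-cubics}, I substitute the linear permutation $L(x,y):=(x^{2^j},\,c\,y^{2^j})$ of $\Fptwom$, with $j\in\{0,1\}$ and $c\in\Fpm^*$, into $f_{k,s,\alpha}$, and post-compose with the inverse Frobenius twist $N(u,v):=\big(u^{2^{m-j}},\,c^{-2^{m-j}}v^{2^{m-j}}\big)$. Expanding, using $(y^{2^j})^{(2^k+1)2^s}=y^{2^j(2^k+1)2^s}$ and $2^{m-j}2^{j}\equiv 1\pmod{2^m-1}$, one obtains $N\big(f_{k,s,\alpha}(L(x,y))\big)=f_{k,s,\alpha'}(x,y)$ with $\alpha'=\alpha^{2^{m-j}}c^{(2^k+1)2^{s-j}}$; equivalently $f_{k,s,\alpha}(L(x,y))=N^{-1}\big(f_{k,s,\alpha'}(x,y)\big)$, which is precisely the linear-equivalence relation of \autoref{def:equivalence} (with $M=0$ and $L,N$ linear). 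As $c$ ranges over $\Fpm^*$ the factor $c^{(2^k+1)2^{s-j}}$ ranges over all cubes, so $\alpha'$ ranges over the coset $\alpha^{2^{m-j}}\cdot\{\text{cubes}\}$; for $j=0$ this is the coset of $\alpha$ itself, and for $j=1$ it is the other non-cube coset. Thus $\alpha'$ attains every non-cube, which proves \ref{item:non-cubics}.

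For \ref{item:k=-l,s=-t}, thanks to \ref{item:non-cubics} it suffices to realise the two parameter moves $(k,s)\mapsto(m-k,s)$ and $(k,s)\mapsto(k,m-s)$ by a single linear equivalence each, for one admissible choice of non-cubes; composing them then also yields $(k,s)\mapsto(m-k,m-s)$, and these four targets are exactly the pairs $(\ell,t)$ permitted by $k\equiv\pm\ell$, $s\equiv\pm t\pmod m$ under the stated range conditions (with $s=0$ and $s=m$ naming the same function). The two moves are witnessed by monomial maps resting on the congruences $2^k(2^{m-k}+1)\equiv 2^k+1$ and $2^s(2^k+1)2^{m-s}\equiv 2^k+1\pmod{2^m-1}$: for the first, $L=\mathrm{id}$ and $N(u,v)=(u^{2^k},v)$ give $N\big(f_{m-k,s,\beta}(x,y)\big)=f_{k,s,\beta^{2^k}}(x,y)$; for the second, $L(x,y)=(y,x)$ and $N(u,v)=(\alpha\,u^{2^s},v)$ give $f_{k,s,\alpha}(L(x,y))=N\big(f_{k,m-s,\beta}(x,y)\big)$ as soon as $\beta$ is chosen with $\alpha\beta^{2^s}=1$, i.e.\ $\beta=\alpha^{-2^{m-s}}$. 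In both cases the new parameter ($\beta^{2^k}$, respectively $\alpha^{-2^{m-s}}$) is a non-cube because Frobenius powers and inversion preserve the group of cubes, and $L,N$ are visibly linear permutations; invoking \ref{item:non-cubics} upgrades each relation to hold for all non-cubes, and composition of linear equivalences gives the remaining case.

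The only genuinely delicate point I foresee is the non-cube bookkeeping for the coefficients: after a substitution, the scalar standing in front of the $y$-monomial of the first coordinate is a non-cube times a cube, hence still a non-cube, but matching it directly against the target's coefficient would force one to solve an equation of the form $c^{(2^k+1)2^s}=(\text{non-cube})$, which has no solution. Proving \ref{item:non-cubics} first, and normalising $\alpha$ only afterwards, is exactly what circumvents this; without that reduction one is pushed into an explicit case split over the two non-cube cosets of $\Fpm^*$. Beyond that, the computations are routine manipulations of Gold-type exponents modulo $2^m-1$.
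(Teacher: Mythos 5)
Your proof is correct and follows essentially the same route as the paper's: part (a) via diagonal monomial substitutions (a scalar to move within a non-cube coset, a single Frobenius twist $x\mapsto x^2$ to swap the two cosets, using $\gcd((2^k+1)2^s,2^m-1)=3$), and part (b) via explicit monomial equivalences realising $(k,s)\mapsto(-k,s)$ and $(k,s)\mapsto(k,-s)$ and then composing. The only cosmetic differences are that the paper solves a congruence for the scalar exponent using a primitive element where you argue via the image of the cube map, and that it routes the $s\mapsto -s$ move through an auxiliary function $f'$ where you write the equivalence in one step.
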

	\begin{proof}
		By \cref{eq:LinEquiv_1} and \cref{eq:LinEquiv_2}, the two functions $f_{k,s,\alpha}$ and $f_{\ell,t,\beta}$ are linearly equivalent if there exist invertible mappings $L,N$ on $\F_{2^m}^2$, represented by linearized polynomials $L_A(X,Y), L_B(X,Y) \in \Fpm[X,Y]$ and $N_1(X),\dots,N_4(X) \in \Fpm[X]$, respectively, such that the two equations
		\begin{align*}
			L_A(x,y)^{2^k+1} + \alpha L_B(x,y)^{(2^k+1)2^s} &= N_1(x^{2^\ell+1}+\beta y^{(2^\ell+1)2^t}) + N_3(xy),\\
			L_A(x,y)L_B(x,y) &= N_2(x^{2^\ell+1}+\beta y^{(2^\ell+1)2^t}) + N_4(xy)
		\end{align*}
		hold for all $x,y \in \Fpm$. Note that in all the following cases, $N_2(X) = N_3(X) = 0$, hence we will not mention these polynomials in the remainder of the proof. Let $x,y \in \Fpm$.
		
		\begin{enumerate}[label=(\alph*)]
			\item Let $(k,s) = (\ell,t)$, and denote by $\gamma$ a primitive element of $\Fpm$. For the non-cubes~$\alpha, \beta \in \Fpm^*$ write $\alpha = \gamma^a$ and $\beta = \gamma^b$ for some $a,b \not\equiv 0 \pmod{3}$. We will distinguish the two cases $a \equiv b \pmod{3}$ and $a \not\equiv b \pmod{3}$. First, assume $a \equiv b \pmod{3}$. Then $f$ and $g$ are linearly equivalent as can be seen by choosing
			\begin{align*}
				L_A(X,Y) &= x,& L_B(X,Y) &= \gamma^cY,& N_1(X)&=X,&  N_4(X) &= \gamma^cX,
			\end{align*}
			where $c \in \{0, \dots, 2^m-1\}$ such that 
			\[
				(2^k+1) 2^s c \equiv b-a \pmod{2^m-1}.
			\]
			Such an integer $c$ always exists as $\gcd((2^k+1)2^s,2^m-1) = 3$ and $b-a \equiv 0 \pmod{3}$. If $a \not\equiv b \pmod{3}$, then $f$ and $g$ are linearly equivalent by
			\begin{align*}
				L_A(X,Y) &= X^2,& L_B(X,Y)&=\gamma^cY^2,& N_1(X) &= X^2,&  N_4(X) &= \gamma^cX^2,
			\end{align*}
			where $c$ satisfies
			\[
				(2^k+1) 2^s c \equiv 2b-a \pmod{2^m-1}.
			\]
			By the same reasoning as before and considering that $2b-a \equiv 0 \pmod{3}$, such an integer $c$ always exists.
			
			\item By \ref{item:non-cubics}, we can assume $\alpha = \beta$. We write $f_{k,s}$ instead of $f_{k,s,\alpha}$. If $k \equiv -\ell \pmod{m}$ and $s=t$, then $f_{k,s}$ and $f_{-k,s}$ are linearly equivalent by
			\begin{align*}
				L_A(X,Y) &= X^{2^{-k}},& L_B(X,Y)&=Y^{2^{-k}},& N_1(X) &= X,&  N_4(X) &= X^{2^{-k}}.
			\end{align*}
			
			Now, let $k=\ell$ and $s \equiv -t \pmod{m}$. Define a function $f'\colon \Fpm \to \Fpm$ as
			\[
				f'(x,y) = \left(y^{2^k+1} + \alpha^{2^{-s}}x^{(2^k+1)2^s},\ xy\right).
			\]
			We show that both $f_{k,s}$ and $f_{k,-s}$ are linearly equivalent to $f'$. For $f_{k,s}$, we choose 
			\begin{align*}
				L_A(X,Y) &= Y,& L_B(X,Y)&=X,& N_1(X)&=X,& N_4(X) &= X
			\end{align*} 
			and use the result from \ref{item:non-cubics}. The function $f_{k,-s}$ is linearly equivalent to $f'$ by
			\begin{align*}
				L_A(X,Y) &= X,& L_B(X,Y) &= Y,& N_1(X) &= \alpha^{2^{-s}}X^{2^s},& N_4(X) &= X.
			\end{align*}
			
			As $f_{k,s}$ is linearly equivalent to both $f_{-k,s}$ and $f_{k,-s}$, it follows from combining these results that $f_{k,s}$ is linearly equivalent to $f_{-k,-s}$.
		\end{enumerate}
		\vspace{-2em}
	\end{proof}
	
	Thanks to \autoref{lem:TrivialEquivalences}, we will, from now on, fix the non-cube $\alpha$ and restrict the parameters $k,s$ to $0 < k < \frac{m}{2}$ and $0 \le s \le \frac{m}{2}$. We will moreover omit the subscript $\alpha$ and simply write $f_{k,s}$ instead of $f_{k,s,\alpha}$.\par
	
	We are now able to prove \autoref{th:ZP_inequivalence1}. In its proof, we only consider Pott-Zhou APN functions on the finite field $\Fptwom$ with $m \ge 6$. Note that, for $m=2$, all the Pott-Zhou APN functions are CCZ-equivalent according to \autoref{lem:TrivialEquivalences}. Hence, up to equivalence, the function $f_{1,0}$ is the unique Pott-Zhou APN function on $\F_{2^2}$. It is actually equivalent to the unique Gold APN function $x \mapsto x^3$. The result for the case $m=4$ was already given by Pott and the second author~\cite{zhou2013}, we restate it at the beginning of our proof. Let us now recall \autoref{th:ZP_inequivalence1}.
	
	\begin{thm:ZP}
		Let $m\ \ge 4$ be an even integer. Let $k,\ell$ be integers coprime to $m$ such that $0 < k,\ell < \frac{m}{2}$, let $s,t$ be even integers with $0 \le s,t \le \frac{m}{2}$, and let $\alpha,\beta \in \Fpm^*$ be non-cubes. Two Pott-Zhou APN~functions $f_{k,s,\alpha}, f_{\ell,t,\beta} \colon \Fptwom \to \Fptwom$ from \autoref{th:ZhouPottAPN}, where
		\begin{align*}
			f_{k,s,\alpha}(x,y) = \left(x^{2^k+1} + \alpha y^{(2^k+1)2^s},\ xy\right)	&&\text{and}	&& f_{\ell,t,\beta}(x,y) = \left(x^{2^\ell+1} + \beta y^{(2^\ell+1)2^t},\ xy\right),
		\end{align*}
		are CCZ-equivalent if and only if $k = \ell$ and $s = t$.
	\end{thm:ZP}
	\begin{proof}
		As shown in \autoref{lem:TrivialEquivalences}, the choice of the non-cubes $\alpha$ and $\beta$ does not matter. Hence, set $\alpha = \beta$ and denote the two Pott-Zhou APN functions by $f_{k,s}$ and $f_{\ell,t}$.\par
		For $m=4$, there exist two equivalence classes of Pott-Zhou APN functions, namely $f_{1,0}$ and $f_{1,2}$. Their CCZ-inequivalence has been shown by Pott and the second author~\cite{zhou2013} who computed their $\Gamma$-ranks as $13200$ and $13642$, respectively.\par
		For the remainder of this proof, let $m \ge 6$. As in the proof of \autoref{lem:TrivialEquivalences}, the functions $f_{k,s}$ and $f_{\ell,t}$ are EA-equivalent if there exist linearized polynomials $L_A (X,Y)$, $L_B(X,Y)$, $M_A(X,Y)$, $M_B(X,Y) \in \Fpm[X,Y]$ and $N_1(X), \dots, N_4(X) \in \F_{2^m}[X]$, where 
		\[
			L(X,Y) = (L_A(X,Y), L_B(X,Y))
		\]
		and
		\[
			N(X,Y) = (N_1(X) + N_3(Y),\ N_2(X) + N_4(Y))
		\]
		are invertible, such that the equations
		\begin{align}
		\label{eq:ZhouPottEquiv1}
			L_A(x,y)^{2^k+1} + \alpha L_B(x,y)^{(2^k+1)2^s} &= N_1(x^{2^\ell+1}+\alpha y^{(2^\ell+1)2^t}) + N_3(xy) + M_A(x,y),\\
		\label{eq:ZhouPottEquiv2}
			L_A(x,y)L_B(x,y) &= N_2(x^{2^\ell+1}+\alpha y^{(2^\ell+1)2^t}) + N_4(xy) + M_B(x,y)
		\end{align}
		hold for all $x,y \in \Fpm$. We write $L_A(X,Y) = L_1(X) + L_3(Y)$ and $L_B(X,Y) = L_2(X) + L_4(Y)$ for linearized polynomials $L_1(X), \dots, L_4(X) \in \F_{2^m}[X]$. Hence,
		\[
			L(X,Y) = \left(L_1(X)+L_3(Y),\ L_2(X) + L_4(Y)\right).
		\]
		Write 
		\begin{align*}
			L_1(X) = \sum_{i=0}^{m-1} a_i X^{2^i},&& L_2(X) = \sum_{i=0}^{m-1} b_i X^{2^i},&& L_3(Y) = \sum_{i=0}^{m-1} \overline{a}_i Y^{2^i},&& L_4(Y) = \sum_{i=0}^{m-1} \overline{b}_i Y^{2^i}.
		\end{align*}
		Moreover, define linearized polynomials $M_1(X), \dots, M_4(X) \in \F_{2^m}[X]$ in the same way as $L_1(X), \dots, L_4(X)$ were defined. For the remainder of the proof, let $x,y \in \Fpm$. We first prove the following claim.
		
		\paragraph{Claim.} If $f_{k,s}$ and $f_{\ell,t}$ are EA-equivalent, then $k=\ell$ and each of the linearized polynomials $L_1(X), \dots, L_4(X)$ is either a binomial, a monomial or zero.\\
		
		\noindent We will prove the result for $y=0$, hence we only consider $L_1(X)$ and $L_2(X)$. By proceeding analogously, it can be shown that the statement also holds for $x=0$ and the polynomials $L_3(Y)$ and $L_4(Y)$. Let $y=0$. Then \cref{eq:ZhouPottEquiv1} and \cref{eq:ZhouPottEquiv2} can be reduced to
		\begin{align}
		\label{eq:y=0_1}
			L_1(x)^{2^k+1} + \alpha L_2(x)^{(2^k+1)2^s} &= N_1(x^{2^\ell+1}) + M_1(x),\\
		\label{eq:y=0_2}
			L_1(x) L_2(x) &= N_2(x^{2^\ell+1}) + M_2(x)
		\end{align}
		for all $x \in \Fpm$. Write 
		\begin{align*}
			N_1(X) = \sum_{i=0}^{m-1} c_i X^{2^i}&& \text{and} &&N_2(X) = \sum_{i=0}^{m-1} d_i X^{2^i}.
		\end{align*}
		
		We first consider the case, that one of $L_1(X)$ or $L_2(X)$ is zero. Assume $L_1(X) \ne 0$ and $L_2(X)=0$. If $L_1(X)=0$ and $L_2(X) \ne 0$, the same result can be obtained by symmetry. In our case, \cref{eq:y=0_1} becomes
		\[
			L_1(x)^{2^{k+1}} = N_1(x^{2^\ell+1}) + M_1(x),
		\]
		which is equivalent to the statement that the Gold APN functions $x \mapsto x^{2^k+1}$ and $x \mapsto x^{2^\ell+1}$ are EA-equivalent. According to \autoref{th:GoldAPNs}, this holds if and only if $k=\ell$. We showed that, in this case, $L_1(X)$ is a linearized monomial.\par
		
		Now, let both $L_1(X), L_2(X) \ne 0$. Then \cref{eq:y=0_2} becomes
		\begin{align}
		\label{eq:y=0_Case2_1}
			\sum_{i=0}^{m-1}a_ib_i x^{2^{i+1}} + \sum_{\substack{i,j=0,\\j \ne i}}^{m-1}a_ib_j x^{2^i+2^j} = \sum_{i=0}^{m-1}d_ix^{(2^\ell+1)2^i} + M_2(x).
		\end{align}
		The first sum on the left-hand side of \cref{eq:y=0_Case2_1} is a linearized polynomial and the second sum does not contain linear parts. Hence, $M_2(x) = \sum_{i=0}^{m-1}a_ib_i x^{2^{i+1}}$. Omitting the linear parts, we rewrite \cref{eq:y=0_Case2_1} as
		\[
			\sum_{0 \le i < j \le m-1} (a_ib_j + a_jb_i)x^{2^i+2^j} = \sum_{i=0}^{m-1}d_ix^{(2^\ell+1)2^i}
		\]
		and obtain
		\begin{align}
		\label{eq:y=0_Case2_LinEq1}
			a_i b_{i+\ell} + a_{i+\ell} b_i 	&= d_i 	&&\text{for all } i,\\
		\label{eq:y=0_Case2_LinEq2}
			a_i b_j + a_j b_i 					&=0		&&\text{for } j \ne i, i\pm \ell,
		\end{align}
		where the subscripts are calculated modulo $m$. We separate the proof into two cases: First, the case that $d_i = 0$ for all $i = 0, \dots, m-1$ and, second, the case that $d_u \ne 0$ for some $u \in \{0,\dots,m-1\}$. 
		
		\subparagraph{Case 1.} In this case, we show that if $d_i=0$ for all $i = 0, \dots, m-1$, the problem can be reduced to the Gold APN Case from \autoref{th:GoldAPNs}, and hence, $k = \ell$ and $L_1(X)$ and $L_2(X)$ are monomials of the same degree. Assume $d_i=0$ for all $i = 0, \dots, m-1$, which means $N_2(X) = 0$. In this case, \cref{eq:y=0_Case2_LinEq1} and \cref{eq:y=0_Case2_LinEq2} combine to
		\begin{align}
		\label{eq:y=0_Case2_LinEq3}
			a_i b_j + a_j b_i 					&=0		&&\text{for } j \ne i.
		\end{align}
		Recall, that $L_1(X), L_2(X) \ne 0$. Consequently, there are at least two nonzero coefficients~$a_u, b_{u'}$. If $u = u'$, then the corresponding term $a_u b_u X^{2^{u+1}}$ is linearized. Hence, it is a part of~$M_2(X)$, not of $N_2(X)$. If $u \ne u'$, then, by \cref{eq:y=0_Case2_LinEq3}, 
		\[
			a_u b_{u'} + a_{u'} b_u = 0.
		\]
		Hence, $a_{u'},b_{u} \ne 0$ and $\frac{a_u}{b_u} = \frac{a_u'}{b_u'}$. Moreover, it follows that all pairs $(a_j, b_j)$ satisfy either
		\begin{align}
			\label{eq:y=0_TypeI_TypeII}
			a_j = b_j &= 0	&&\text{or}	&& \frac{a_j}{b_j} = \Delta,
		\end{align}
		where $\Delta := \frac{a_u}{b_u}$ is a nonzero constant. Consequently, $b_j = \delta a_j$, where $\delta = \Delta^{-1}$, for all $j=0, \dots, m-1$, and $L_2(X)$ is a multiple of $L_1(X)$, namely 
		\[
			L_2(X) = \delta L_1(X).
		\]
		Written in this way, it is obvious that $L_1(X)L_2(X) = \delta (L_1(X))^2$ is a linearized polynomial, hence $N_2(X) = 0$ and $M_2(X) = \delta (L_1(X))^2$. Next, we plug $L_1(X)$ and $L_2(X)$ into \cref{eq:y=0_1} and obtain
		\begin{align}
		\label{eq:y=0_N_2(x)=0}
			L_1(x)^{2^k+1} + \alpha\delta^{(2^k+1)2^s} L_1(x)^{(2^k+1)2^s} = N_1(x^{2^\ell + 1}) + M_1(x).
		\end{align}
		
		If $s = 0$, then \cref{eq:y=0_N_2(x)=0} becomes
		\[
			\left(1+\alpha\delta^{(2^k+1)2^s}\right) L_1(x)^{2^k+1} = N_1(x^{2^\ell + 1}) + M_1(x),
		\]
		which implies that the Gold APN functions $x \mapsto x^{2^k+1}$ and $x \mapsto x^{2^\ell +1}$ are EA-equivalent. According to \autoref{th:GoldAPNs}, it follows that $k = \ell$ and that $L_1(X)$ is a monomial. Consequently, $L_2(X) = \delta L_1(X)$ is also a monomial, it has the same degree as $L_1(X)$.\par
		
		If $s \ne 0$, we define a mapping $P\colon \F_{2^m} \to \F_{2^m}$ by
		\[
			P(x) = x + \alpha \delta^{(2^k+1)2^s}x^{2^s}
		\]
		and rewrite the left hand side of \cref{eq:y=0_N_2(x)=0} as
		\[
			P(L_1(x)^{2^k+1}).
		\]
		
		We show that $P$ is bijective. Since $P$ is linear, it is sufficient to show that it has no nonzero roots. If $P$ had a nonzero root, it would solve the equation
		\[
			\alpha^{-1} = \delta^{(2^k+1)2^s}x^{2^s-1}.
		\]
		However, this equation can never be true: its left-hand side is obviously a non-cube. Since $\gcd(2^k+1, 2^m-1) = 3$, the first factor on the right-hand side, $\delta^{(2^k+1)2^s}$, is a cube. As $\gcd(2^s-1,2^m-1) = 2^{\gcd(s,m)}-1 = 2^{2\gcd(\frac{s}{2},\frac{m}{2})}-1$ is divisible by $3$, the second factor, $x^{2^s-1}$, is also a cube. Hence, we have a cube on the right-hand side and a non-cube on the left-hand side, which is a contradiction.\par
		
		Denote by $P^{-1}$ the inverse of $P$ and rewrite \cref{eq:y=0_N_2(x)=0} as
		\begin{align}
		\label{eq:y=0_N_2(x)=0_1}
			L_1(x)^{2^k+1} = P^{-1}(x) \circ N_1(x^{2^\ell+1}) + P^{-1}(x) \circ M_1(x).
		\end{align}
		Note that $P^{-1}$ is also linear. Hence, \cref{eq:y=0_N_2(x)=0_1} leads us to the Gold APN function case again, and it follows that $k = \ell$ and that $L_1(X)$ and $L_2(X)$ are monomials of the same degree.
				
		\subparagraph{Case 2.} Consider \cref{eq:y=0_Case2_LinEq1} and \cref{eq:y=0_Case2_LinEq2} again. In this case, we show that if $d_u \ne 0$ for some~$u \in \{0, \dots, m-1\}$, we obtain that $k=\ell$ and that $L_1(X)$ and $L_2(X)$ have one of the following shapes: either
		\begin{align*}
			L_1(X) = a_u X^u	&&\text{and} &&L_2(X) = b_u X^u
		\end{align*}
		or
		\begin{align*}
			L_1(X) = a_u X^u	&&\text{and} &&L_2(X) = b_{u+k} X^{u+k}
		\end{align*}
		or
		\begin{align*}
			L_1(X) = a_u X^u + a_{u+k} X^{u+k}	&&\text{and} &&L_2(X) = b_u X^u + b_{u+k} X^{u+k}.
		\end{align*}
		
		Assume $d_u \ne 0$ for some $u \in \{0,\dots, m-1\}$ which means $N_2(X) \ne 0$. Then, by \cref{eq:y=0_Case2_LinEq1}, $a_u$ and $b_u$ cannot be zero at the same time. We will separate the proof of Case 2 into two subcases: first, Case 2.1, where both $a_u$ and $b_u$ are nonzero, and second, Case 2.2, where exactly one of $a_u$ and $b_u$ is nonzero. Both these cases will be separated into several subcases again.\par
		
		\subparagraph{Case 2.1.} Assume $a_u \ne 0$ and $b_u \ne 0$. Then, from \cref{eq:y=0_Case2_LinEq2}, it follows that all pairs~$(a_j,b_j)$, where $j \ne u, u \pm \ell$, satisfy \cref{eq:y=0_TypeI_TypeII}. We will separate the proof of this case into three subcases:
		
		\subparagraph{Case 2.1.1.} Assume there exists $\ell' \ne 0, \pm\ell, \pm 2\ell$ such that $a_{u+\ell'}, b_{u+\ell'} \ne 0$. By \cref{eq:y=0_TypeI_TypeII}, this implies $\frac{a_{u+\ell'}}{b_{u+\ell'}} = \Delta$. Since $u+\ell' \pm \ell \ne u \pm \ell$, it follows from \cref{eq:y=0_Case2_LinEq2} with $i =u + \ell'$ that both $(a_{u+\ell},b_{u+\ell})$ and $(a_{u-\ell},b_{u-\ell})$ also have to satisfy one of the equations in \cref{eq:y=0_TypeI_TypeII}. Hence, \cref{eq:y=0_TypeI_TypeII} holds for all pairs $(a_j,b_j)$, and we know from the calculations below \cref{eq:y=0_TypeI_TypeII} that this implies $N_2(X) = 0$. This is a contradiction.
		
		\subparagraph{Case 2.1.2.} Now, assume $a_j=b_j=0$ for $j \ne u,u\pm \ell$.
		In this case, we obtain only one equation from \cref{eq:y=0_Case2_LinEq2}, namely 
		\[
			a_{u-\ell}b_{u+\ell} + a_{u+\ell}b_{u-\ell} = 0.
		\]
		Hence, either
		\begin{enumerate}[label=(\roman*), ref=(\roman*)]
			\item\label{item:Case2Ab_1} $a_{u-\ell} = a_{u+\ell} = 0$ or $b_{u-\ell} = b_{u+\ell} = 0$, meaning that one of $L_1(X)$ and $L_2(X)$ is a monomial and the other one a trinomial, or
			\item\label{item:Case2Ab_2} $a_{u-\ell} = b_{u-\ell} = 0$ or $a_{u+\ell} = b_{u+\ell} = 0$, meaning that both $L_1(X)$ and $L_2(X)$ are binomials consisting of terms of the same degree, or
			\item\label{item:Case2Ab_3} $a_{u \pm \ell},b_{u \pm \ell} \ne 0$ and $\frac{a_{u-\ell}}{b_{u-\ell}} = \frac{a_{u+\ell}}{b_{u+\ell}}$, meaning that both $L_1(X)$ and $L_2(X)$ are trinomials.
		\end{enumerate}
	  	We will consider each of these three subcases.
	  	
  		\subparagraph{Subcase~\ref{item:Case2Ab_1}.} Assume $b_{u-\ell} = b_{u+\ell} = 0$. The case $a_{u-\ell} = a_{u+\ell} = 0$ follows by symmetry. We consider polynomials
  		\begin{align*}
	  		L_1(X) &= a_{u-\ell}X^{2^{u-\ell}} + a_u X^{2^u} + a_{u+\ell}X^{2^{u+\ell}} & \text{and}&& L_2(X) &= b_u X^{2^u}
  		\end{align*}
  		which we plug into the left-hand side of \cref{eq:y=0_1}. Hence,
  		\begin{align}
  		\nonumber
	  		L_1(x)^{2^k+1} &=
	  		a_{u-\ell}^{2^k+1} x^{2^{u-\ell}(2^k+1)} +
	  		a_{u}^{2^k+1} x^{2^{u}(2^k+1)} +
	  		a_{u+\ell}^{2^k+1} x^{2^{u+\ell}(2^k+1)} \\&\quad\label{eq:y=0_TypeI_TypeIIb_L1}+
	  		a_{u-\ell}^{2^k}a_u x^{2^{u}(2^{k-\ell}+1)} +
	  		a_{u}^{2^k}a_{u+\ell} x^{2^{u+\ell}(2^{k-\ell}+1)} +
	  		a_{u+\ell}^{2^k}a_{u-\ell} x^{2^{u-\ell}(2^{k+2\ell}+1)} \\&\quad\nonumber+
	  		a_{u-\ell}^{2^k}a_{u+\ell} x^{2^{u+\ell}(2^{k-2\ell}+1)} +
	  		a_{u}^{2^k}a_{u-\ell} x^{2^{u-\ell}(2^{k+\ell}+1)} +
	  		a_{u+\ell}^{2^k}a_{u} x^{2^{u}(2^{k+\ell}+1)}.
  		\end{align}
  		and
  		\begin{align}
  		\label{eq:y=0_TypeI_TypeIIb_L2}
  			\alpha L_2(x)^{2^s(2^k+1)} = \alpha b_u^{2^s(2^k+1)}x^{2^{s+u}(2^k+1)}.
  		\end{align}
		Recall that the right-hand side of \cref{eq:y=0_1} is 
		\[
			\sum_{i=0}^{m-1}c_i x^{2^i(2^\ell+1)} + M_1(x).
		\]
		We show, that not all of the first three summands of \cref{eq:y=0_TypeI_TypeIIb_L1}, that all contain the factor~$(2^k+1)$ in their exponents, can be canceled. As $0 < \ell < \frac{m}{2}$, they cannot cancel each other. If $\ell = \frac{m}{2}-k$, the exponent of the sixth term contains the factor $(2^k+1)$, it can be written as $2^{u-\frac{m}{2}}(2^k+1)$. However, by the same reasoning as above, it cannot cancel any of the first three terms. The only case where one summand could be canceled is the following: if $\ell=k$, the seventh and the second term can be summarized and could potentially cancel each other. In total, for arbitrary $k$ and $\ell$, at least the first and the third summand of~\cref{eq:y=0_TypeI_TypeIIb_L1} contain $(2^k+1)$ in their exponents. Note that none of them can be canceled by~\cref{eq:y=0_TypeI_TypeIIb_L2}: as $m$ and $s$ are even and $\gcd(\ell,m) =1$, it follows that $s \not\equiv \pm \ell \pmod{m}$.\par
		
		We now compare the left-hand side and the right-hand side of \cref{eq:y=0_1}. Since the left-hand side contains terms with $x^{2^i(2^k+1)}$, it follows that $k=\ell$. Note that in this case, the fourth and fifth summand of \cref{eq:y=0_TypeI_TypeIIb_L1} become linearized, hence
		\[
			M_1(X) = a_{u-k}^{2^k}a_u X^{2^{u+1}} +
			a_{u}^{2^k}a_{u+k} X^{2^{u+k+1}}.
		\]
		Now, consider the sixth, eighth and ninth summand of \cref{eq:y=0_TypeI_TypeIIb_L1}:
		\begin{align*}
			a_{u+k}^{2^k}a_{u-k} x^{2^{u-k}(2^{3k}+1)},&&
			a_{u}^{2^k}a_{u-k} x^{2^{u-k}(2^{2k}+1)},&&
			a_{u+k}^{2^k}a_{u} x^{2^{u}(2^{2k}+1)}.
		\end{align*}
		As $m \ge 6$ and $\gcd(k,m)=1$, we have $2k \not\equiv \pm k \pmod{m}$ and  $3k \ne \pm k \pmod{m}$. Hence, these terms cannot be represented in the form $c_ix^{2^i(2^k+1)}$ which means that their coefficients have to be zero. As $a_u \ne 0$, it follows that $a_{u-k} = a_{u+k} = 0$. Hence, $L_1(X)$ and $L_2(X)$ are monomials of the same degree,
		\begin{align}
		\label{eq:y=0_L1L2_samedegree}
			L_1(X) &= a_u X^{2^u}& \text{and} && L_2(X) = b_u X^{2^u},
		\end{align}
		and $M_1(X) = 0$.
		
		\subparagraph{Subcase~\ref{item:Case2Ab_2}.} Assume $a_{u-\ell} = b_{u-\ell} = 0$. The case $a_{u+\ell} = b_{u+\ell} = 0$ follows by symmetry. In our case,
 		\begin{align*}
			L_1(X) &= a_u X^{2^u} + a_{u+\ell}X^{2^{u+\ell}} & \text{and}&& L_2(X) &= b_u X^{2^u} + b_{u+\ell}X^{2^{u+\ell}}.
		\end{align*}
		For the left-hand side of \cref{eq:y=0_1}, we obtain
  		\begin{align}
		\nonumber
		L_1(x)^{2^k+1} &=
		a_{u}^{2^k+1} x^{2^{u}(2^k+1)} +
		a_{u+\ell}^{2^k+1} x^{2^{u+\ell}(2^k+1)} \\&\quad\label{eq:y=0_TypeI_TypeIIb_L1_2}+
		a_{u}^{2^k}a_{u+\ell} x^{2^{u+\ell}(2^{k-\ell}+1)} +
		a_{u+\ell}^{2^k}a_{u} x^{2^{u}(2^{k+\ell}+1)}.
		\end{align}
		and
		\begin{align}
		\nonumber
			\alpha L_2(x)^{2^s(2^k+1)} &=
			\alpha b_u^{2^s(2^k+1)}x^{2^{s+u}(2^k+1)} +
			\alpha b_{u+\ell}^{2^s(2^k+1)}x^{2^{s+u+\ell}(2^k+1)} \\&\quad\label{eq:y=0_TypeI_TypeIIb_L2_2}+
			\alpha b_u^{2^{s+k}}b_{u+\ell}^{2^s}x^{2^{s+u+\ell}(2^{k-\ell}+1)} +
			\alpha b_{u+\ell}^{2^{s+k}}b_{u}^{2^s}x^{2^{s+u}(2^{k+\ell}+1)}.
		\end{align}
		As in Subcase \ref{item:Case2Ab_1}, the first two terms of \cref{eq:y=0_TypeI_TypeIIb_L1_2} and \cref{eq:y=0_TypeI_TypeIIb_L2_2}, respectively, cannot cancel each other. We will consider the cases $s \ne 0$ and $s = 0$.\par
		
		First, assume $s \ne 0$. As $s \not\equiv \pm \ell \pmod{m}$, the terms in \cref{eq:y=0_TypeI_TypeIIb_L1_2} and in \cref{eq:y=0_TypeI_TypeIIb_L2_2} cannot cancel each other if we add both expressions. Consequently, from comparing the left-hand side of \cref{eq:y=0_1} with its right-hand side, it follows that $k = \ell$. Using the same argument as in Subcase \ref{item:Case2Ab_1}, we obtain $a_{u+\ell} = b_{u+\ell} = 0$, and $L_1(X)$ and $L_2(X)$ are monomials of the same degree as in \cref{eq:y=0_L1L2_samedegree}. Moreover, $M_1(X) = 0$.\par
		
		Next, assume $s = 0$. Now, the corresponding terms in \cref{eq:y=0_TypeI_TypeIIb_L1_2} and \cref{eq:y=0_TypeI_TypeIIb_L2_2} can be summarized. Consider the first summand
		\begin{align}
		\label{eq:y=0_subcase3}
			\left(a_{u}^{2^k+1} + \alpha b_{u}^{2^k+1}\right) x^{2^{u}(2^k+1)}.
		\end{align}
		As $a_{u}, b_{u} \ne 0$, its coefficient is zero, if and only if
		\[
			\alpha = \left(\frac{a_{u}}{b_{u}}\right)^{2^k+1}.
		\]
		However, as $\gcd(2^k+1, 2^m-1) = 3$, this implies that $\alpha$ is a cube which is a contradiction. Hence, this term occurs with a nonzero coefficient on the left-hand side of \cref{eq:y=0_1}, and we need $k = \ell$ to represent it as $c_i x^{2^i(2^\ell + 1)}$ on the right-hand side of \cref{eq:y=0_1}. If $k=\ell$, the second term in the sum of \cref{eq:y=0_TypeI_TypeIIb_L1_2} and \cref{eq:y=0_TypeI_TypeIIb_L2_2} can also be represented in this way, and the third term is linearized which means
		\[
			M_1(X) = \left(a_u^{2^k}a_{u+k} + \alpha b_u^{2^k}b_{u+k}\right) X^{2^{u+k+1}}.
		\] 
		Hence, we consider the fourth summand:
		\[
			\left( a_{u+k}^{2^k} a_u + \alpha b_{u+k}^{2^k} b_u \right) x^{2^u(2^{2k}+1)}.
		\]
		As $2k \not\equiv \pm k \pmod{m}$, it cannot be represented as $c_i x^{2^i(2^k + 1)}$. Hence, its coefficient has to be zero. This is the case if $a_{u+k} = b_{u+k} = 0$ or if 
		\begin{align}
		\label{eq:binomial_coefficients}
			\left(\frac{a_u}{b_u}\right)\left(\frac{a_{u+k}}{b_{u+k}}\right)^{2^k} = \alpha.
		\end{align}
		Consequently, either $L_1(X)$ and $L_2(X)$ are monomials of the same degree, as in \cref{eq:y=0_L1L2_samedegree}, and $M_1(X) = 0$, or $L_1(X)$ and $L_2(X)$ are binomials of the form
 		\begin{align}
 		\label{eq:y=0_L1L2_binomials}
			L_1(X) &= a_u X^{2^u} + a_{u+k}X^{2^{u+k}} & \text{and}&& L_2(X) &= b_u X^{2^u} + b_{u+k}X^{2^{u+k}},
		\end{align}
		where the coefficients satisfy $\left(\frac{a_u}{b_u}\right)\left(\frac{a_{u+k}}{b_{u+k}}\right)^{2^k} = \alpha$. This implies $\frac{a_u}{b_u} \ne \frac{a_{u+k}}{b_{u+k}}$ since otherwise, $\alpha$ would be a cube. In the binomial case, $M_1(X) = \left(a_u^{2^k}a_{u+k} + \alpha b_u^{2^k}b_{u+k}\right) X^{2^{u+k+1}}$.

		\subparagraph{Subcase~\ref{item:Case2Ab_3}.} Now,
  		\begin{align*}
			L_1(X) &= a_{u-\ell}X^{2^{u-\ell}} + a_u X^{2^u} + a_{u+\ell}X^{2^{u+\ell}}\\ \text{and } L_2(X) &= b_{u-\ell}X^{2^{u-\ell}} + b_u X^{2^u} + b_{u+\ell}X^{2^{u+\ell}},
		\end{align*}
		where all coefficients are nonzero and $\frac{a_{u-\ell}}{b_{u-\ell}} = \frac{a_{u+\ell}}{b_{u+\ell}}$. We plug these polynomials into~\cref{eq:y=0_1}. The expression $L_1(x)^{2^k+1}$ is as in \cref{eq:y=0_TypeI_TypeIIb_L1}, and $\alpha L_2(x)^{(2^k+1)2^s}$ looks basically the same: just replace $a$ by $b$, multiply every coefficient by $\alpha$ and apply the automorphism $x \mapsto x^{2^s}$ on every summand. Furthermore, what we mentioned below \cref{eq:y=0_TypeI_TypeIIb_L2} for the coefficients of~$L_1(x)^{2^k+1}$ still holds, now for the coefficients of both $L_1(x)^{2^k+1}$ and $\alpha L_2(x)^{(2^k+1)2^s}$. As in Subcase~\ref{item:Case2Ab_2}, we separate the cases $s \ne 0$ and $s = 0$.\par 
		
		Assume $s \ne 0$. Like before, terms from $L_1(x)^{2^k+1}$ and from $\alpha L_2(x)^{(2^k+1)2^s}$ cannot cancel each other, and it follows that $k = \ell$. We obtain
		\[
			M_1(X) = 
			a_{u-k}^{2^k}a_u X^{2^{u+1}} +
			a_{u}^{2^k}a_{u+k} X^{2^{u+k+1}} +
			\alpha b_{u-k}^{2^{s+k}}b_u^{2^s} X^{2^{s+u+1}} +
			\alpha b_{u}^{2^{s+k}}a_{u+k}^{2^s} X^{2^{s+u+k+1}}.
		\]
		 By the same argument as in Subcase \ref{item:Case2Ab_1}, the sixth, eighth and ninth term of \cref{eq:y=0_TypeI_TypeIIb_L1}, that now contain $x^{2^{3k}+1}$ and $x^{2^{2k}+1}$, cannot be represented as $x^{2^i(2^k+1)}$. The same holds for the corresponding terms in $\alpha L_2(x)^{2^s(2^k+1)}$. As a consequence, the coefficients of these terms, that are
		\begin{align*}
			a_{u+k}^{2^k}a_{u-k},\ a_{u}^{2^k}a_{u-k},\ a_{u+k}^{2^k}a_{u},&& \text{and}&& \alpha b_{u+k}^{2^{k+s}}b_{u-k}^{2^s},\ \alpha b_{u}^{2^{k+s}}b_{u-k}^{2^s},\ \alpha b_{u+k}^{2^{k+s}}b_{u}^{2^s},
		\end{align*}
		have to be zero. As $a_u, b_u \ne 0$, it follows that $a_{u\pm k} = b_{u \pm k} = 0$ which contradicts our assumption.\par 
		Now, assume $s=0$. In this case, we can summarize the corresponding terms of~$L_1(x)^{2^k+1}$ and $\alpha L_2(x)^{2^s(2^k+1)}$ and obtain the same term as in \cref{eq:y=0_subcase3}. By the same argument as in Subcase~\ref{item:Case2Ab_2} for $s=0$, it follows that $k=\ell$. Now, consider the term 
		\begin{align*}
			\left(a_{u+k}^{2^k}a_{u-k} + \alpha b_{u+k}^{2^{k+s}}b_{u-k}^{2^s}\right) x^{2^{u-k}(2^{3k}+1)},
		\end{align*}
		which, as $3k \not\equiv \pm k \pmod{m}$, cannot be represented as $c_i x^{2^i(2^k+1)}$. Hence, its coefficient has to be zero. As $a_{u\pm k}$ and $b_{u\pm k}$ are nonzero, this is only the case if
		\[
			\left(\frac{a_{u-k}}{b_{u-k}}\right)\left(\frac{a_{u+k}}{b_{u+k}}\right)^{2^k} = \alpha.
		\]
		However, as $\frac{a_{u-k}}{b_{u-k}} = \frac{a_{u+k}}{b_{u+k}}$ and $\gcd(2^k+1,2^m-1) = 3$, this contradicts the condition that $\alpha$ is a non-cube. In summary, we cannot obtain possible polynomials $L_1(X)$ and $L_2(X)$ from Subcase~\ref{item:Case2Ab_3}. 
	
		\subparagraph{Case 2.1.3.} Now, assume $a_j=b_j=0$ for $j \ne u, u\pm \ell, u \pm 2\ell$. Recall that all pairs~$(a_j, b_j)$ where $j \ne u, u\pm \ell$ have to satisfy \cref{eq:y=0_TypeI_TypeII}. If $a_{u\pm2\ell} = b_{u\pm2\ell} = 0$, we are in Case~2.1.2. Hence, assume that $a_{u+2\ell}$ and $b_{u+2\ell}$ are nonzero. One can obtain an almost identical result by symmetry when assuming that $a_{u-2\ell}$ and $b_{u-2\ell}$ are nonzero.\par
		
		If $a_{u+2\ell}, b_{u+2\ell} \ne 0$, then, by \cref{eq:y=0_TypeI_TypeII}, $\frac{a_{u+2\ell}}{b_{u+2\ell}} = \Delta$. It follows from \cref{eq:y=0_Case2_LinEq2} that also $(a_{u-2\ell},b_{u-2\ell})$ and $(a_{u-\ell}, b_{u-\ell})$ have to satisfy \cref{eq:y=0_TypeI_TypeII}. However, \cref{eq:y=0_Case2_LinEq2} does not provide any restriction on the value of $(a_{u+\ell}, b_{u+\ell})$. If $(a_{u+\ell}, b_{u+\ell})$ satisfies \cref{eq:y=0_TypeI_TypeII}, then all $(a_j,b_j)$ do and we are in Case 2.1.1. If $(a_{u+\ell}, b_{u+\ell})$ does not satisfy \cref{eq:y=0_TypeI_TypeII}, then it follows from \cref{eq:y=0_Case2_LinEq2} that $a_j=b_j=0$ for $j =u-\ell, u-2\ell$. Hence, 
		\begin{align*}
			L_1(X) &= a_u X^{2^u} + a_{u+\ell}X^{2^{u+\ell}} + a_{u+2\ell}X^{2^{u+2\ell}}\\
			\text{and } L_2(X) &= b_u X^{2^u} + b_{u+\ell}X^{2^{u+\ell}} + b_{u+2\ell}X^{2^{u+2\ell}}.
		\end{align*}
		As $\frac{a_u}{b_u} = \frac{a_{u+2\ell}}{b_{u+2\ell}}$, this case is similar to Case 2.1.2, Subcase~\ref{item:Case2Ab_3}, when we shift all coefficients by $\ell$ with the only difference that now, one of the middle coefficients $a_{u+\ell}, b_{u+\ell}$ can be zero. However, the arguments used in the previous case still hold. Consequently, we do not obtain possible polynomials $L_1(X)$ and $L_2(X)$ from Case 2.1.3.
		
		\subparagraph{Case 2.2.} Assume exactly one of $a_u$ and $b_u$ is nonzero. We show the case $a_u \ne 0$ and $b_u = 0$. The case $a_u = 0$ and $b_u \ne 0$ can be proved analogously. So, assume $a_u \ne 0$ and $b_u = 0$. From \cref{eq:y=0_Case2_LinEq1}, we obtain the equation
		\[
			a_u b_{u+\ell} + a_{u+\ell} b_u = a_u b_{u+\ell} = d_u.
		\]
		As $d_u \ne 0$, it follows that $b_{u+\ell} \ne 0$. From \cref{eq:y=0_Case2_LinEq2}, we obtain
		\begin{align*}
			a_u b_j + a_j b_u = a_u b_j = 0
		\end{align*}
		for $j \ne u, u\pm \ell$. Consequently, $b_j = 0$ for $j \ne u \pm \ell$. Now, it follows from \cref{eq:y=0_Case2_LinEq2} that
		\begin{align*}
			a_{u+\ell} b_j + a_j b_{u+\ell} = a_j b_{u+\ell} = 0
		\end{align*}
		for $j \ne u-\ell,u, u + \ell, u + 2\ell$. Consequently, $a_j = 0$ for $j \ne u-\ell,u, u + \ell, u + 2\ell$.	We will separate the proof of Case~2.2 into two subcases: in Case~2.2.1, we consider $b_{u-\ell} \ne 0$ and in Case~2.2.2, we consider $b_{u-\ell} = 0$.
		
		\paragraph{Case 2.2.1.} Assume $b_{u-\ell} \ne 0$. From \cref{eq:y=0_Case2_LinEq2}, we obtain
		\[
			a_{u-\ell} b_{u+2\ell} + a_{u+2\ell} b_{u-\ell} = a_{u+2\ell} b_{u-\ell} = 0
		\]
		which implies $a_{u+2\ell} = 0$. Moreover, we obtain
		\[
			a_{u-\ell}b_{u+\ell} + a_{u+\ell} b_{u-\ell} = 0
		\]
		which, recalling that $b_{u+\ell}$ is nonzero, implies either $a_{u-\ell} = a_{u+\ell} = 0$ or $a_{u-\ell}, a_{u+\ell} \ne 0$ and $\frac{a_{u-\ell}}{b_{u-\ell}} = \frac{a_{u+\ell}}{b_{u+\ell}}$. We separate these two subcases:
	
		\subparagraph{Subcase (i).} Assume $a_{u-\ell} = a_{u+\ell} = 0$. Then
		\begin{align*}
			L_1(X) &= a_u X^{2^u}	&\text{and}	&&L_2(X) &= b_{u-\ell}X^{2^{u-\ell}} + b_{u+\ell}X^{2^{u+\ell}}.
		\end{align*}
		We plug $L_1(X)$ and $L_2(X)$ into \cref{eq:y=0_1} and obtain on the left-hand side
		\begin{align}
		\label{eq:y=0_Case2Ba_L1}
			L_1(x)^{2^k+1} &= a_u^{2^k+1} x^{2^u(2^k+1)}
		\end{align}
		and
		\begin{align}
		\nonumber
			\alpha L_2(x)^{(2^k+1)2^s} &=
			\alpha b_{u-\ell}^{2^{s+k+1}} x^{2^{s+u-\ell}(2^k+1)} +
			\alpha b_{u+\ell}^{2^{s+k+1}} x^{2^{s+u+\ell}(2^k+1)} +\\ 
		\label{eq:y=0_Case2Ba_L2}
			&\quad \alpha b_{u-\ell}^{2^{s+k}}b_{u+\ell}^{2^s} x^{2^{s+u+\ell}(2^{k-2\ell}+1)} +
			\alpha b_{u+\ell}^{2^{s+k}}b_{u-\ell}^{2^s} x^{2^{s+u-\ell}(2^{k+2\ell}+1)}.
		\end{align}
		Recall that the right-hand side of \cref{eq:y=0_1} is 
		\[
			\sum_{i=0}^{m-1} c_i x^{2^i(2^k+1)} + M_1(x).
		\]
		Since $s \not \equiv \pm \ell \pmod{m}$, the terms containing $x^{2^k+1}$ cannot be canceled with each other. Hence, they can only be represented as $c_i x^{2^i(2^k+1)}$ if $k = \ell$. In this case, however, the last term of $\cref{eq:y=0_Case2Ba_L2}$ contains $x^{2^{3k}+1}$ which cannot be represented in the form $c_i x^{2^i(2^k+1)}$ because $m \ge 6$ and, hence, $3k \not\equiv \pm k \pmod{m}$. Consequently, the corresponding coefficient has to be zero which implies that $b_{u-\ell} = 0$ or $b_{u+\ell} = 0$. This is a contradiction.
		
		\subparagraph{Subcase (ii).} Assume $a_{u-\ell}, a_{u+\ell} \ne 0$ and $\frac{a_{u-\ell}}{b_{u-\ell}} = \frac{a_{u+\ell}}{b_{u+\ell}}$. Then
		\begin{align*}
			L_1(X) &= a_{u-\ell} X^{2^{u-\ell}} + a_u X^{2^u} +a_{u+\ell} X^{2^{u+\ell}}	&\text{and}	&&L_2(X) &= b_{u-\ell}X^{2^{u-\ell}} + b_{u+\ell}X^{2^{u+\ell}}.
		\end{align*}
		We plug these into \cref{eq:y=0_1}. Then $L_1(x)^{2^k+1}$ is as in \cref{eq:y=0_TypeI_TypeIIb_L1} and $\alpha L_2(x)^{(2^k+1)2^s}$ is as in \cref{eq:y=0_Case2Ba_L2}. Since $a_{u}^{2^k+1} x^{2^{u}(2^k+1)}$ can never be canceled by any of the terms in \cref{eq:y=0_Case2Ba_L2}, it follows that $k = \ell$. However, now the expressions $a_{u}^{2^k}a_{u-k} x^{2^{u-k}(2^{2k}+1)}$ and 
		$a_{u+k}^{2^k}a_{u} x^{2^{u}(2^{2k}+1)}$ occur on the left-hand side of \cref{eq:y=0_1}, and they cannot be represented in the form $c_ix^{2^i(2^k+1)}$ on its right-hand side. As the corresponding coefficients are nonzero, this is a contradiction.

		\subparagraph{Case 2.2.2.} Assume $b_{u-\ell} = 0$. By \cref{eq:y=0_Case2_LinEq2},
		\[
			a_{u+\ell} b_{u-\ell} + a_{u-\ell}b_{u+\ell} = a_{u-\ell}b_{u+\ell} = 0
		\]
		which implies $a_{u-\ell} = 0$. Then
		\begin{align*}
			L_1(X) &= a_u X^{2^u} + a_{u+\ell} X^{2^{u+\ell}} + a_{u+2\ell} X^{2^{u+2\ell}}	&\text{and}	&&L_2(X) &= b_{u+\ell}X^{2^{u+\ell}}.
		\end{align*}
		We plug these into \cref{eq:y=0_1}. The expression $L_1(x)^{2^k+1}$ is now similar to \cref{eq:y=0_TypeI_TypeIIb_L1}, we only need to replace $u$ by $u+\ell$. Moreover, 
		\[
			\alpha L_2(x)^{(2^k+1)2^s} = b_{u+\ell}^{2^s(2^k+1)} x^{2^{s+u+l}(2^k+1)}.
		\]
		Since $x^{2^u(2^k+1)}$ and $x^{2^{u+2\ell}(2^k+1)}$ cannot be canceled on the left-hand side of \cref{eq:y=0_1} and have to be represented on its right-hand side, it follows that $k=\ell$. However, if $k=\ell$, the summands
		\begin{align*}
			a_{u+2k}^{2^k}a_{u} x^{2^{u}(2^{3k}+1)},&&
			a_{u+k}^{2^k}a_{u} x^{2^{u}(2^{2k}+1)},&&
			a_{u+2k}^{2^k}a_{u+k} x^{2^{u+k}(2^{2k}+1)}
		\end{align*}
		on the left-hand side cannot be represented as $c_i x^{2^i(2^k+1)}$ on the right-hand side. As~$a_u \ne 0$, it follows that $a_{u+k} = a_{u+2k} = 0$. Consequently, $L_1(X)$ and $L_2(X)$ are monomials of the form
		\begin{align}
		\label{eq:y=0_L1L2_differentdegree}
			L_1(X) &= a_u X^{2^u} &\text{and}	&& L_2(X) = b_{u+k} X^{2^{u+k}},
		\end{align}
		and $M_1(X) = 0$. Note that if we consider Case 2.2 with $a_u = 0$ and $b_u \ne 0$, we obtain
		\begin{align}
		\label{eq:y=0_L1L2_differentdegree_2}
			L_1(X) &= a_{u+k} X^{2^{u+k}} &\text{and}	&& L_2(X) = b_{u} X^{2^{u}}
		\end{align}
		and $M_1(X)=0$ from Case 2.2.2. This concludes the \textbf{proof of our Claim}.\\
		
		 \noindent We summarize the results we have obtained so far. If the APN functions $f_{k,s}$ and $f_{\ell, t}$ are EA-equivalent, then $k = \ell$, and $L_1(X)$ and $L_2(X)$ are of the form
		\begin{align}
			L_1(X) &= a_u X^{2^u} + a_{u+k}X^{2^{u+k}}	&\text{and}	&& L_2(X) &= b_u X^{2^u} + b_{u+k}X^{2^{u+k}}
		\end{align}
		for some $u \in \{0,\dots,m-1\}$. If $L_1(X)$ is a binomial, then, by \cref{eq:y=0_L1L2_binomials}, $L_2(X)$ is as well. Moreover, this case is only possible if $s=0$ and the coefficients of $L_1(X)$ and $L_2(X)$ satisfy \cref{eq:binomial_coefficients}. If $L_1(X)$ is a monomial, then, by \cref{eq:y=0_L1L2_samedegree}, \cref{eq:y=0_L1L2_differentdegree}, \cref{eq:y=0_L1L2_differentdegree_2} and \autoref{th:GoldAPNs}, $L_2(X)$ is a monomial or zero. If $L_1(X) = 0$, then, by \autoref{th:GoldAPNs}, $L_2(X)$ is a monomial.\par
		
		Vice versa, the same statements hold for $L_3(Y)$ and $L_4(Y)$, where 
		\begin{align}
			L_3(Y) &= \overline{a}_w Y^{2^w} + \overline{a}_{w+k}Y^{2^{w+k}}	&\text{and}	&& L_4(Y) &= \overline{b}_w Y^{2^w} + \overline{b}_{w+k}Y^{2^{w+k}}
		\end{align}
		for some $w \in \{0,\dots,m-1\}$.\par
		
		It remains to show that EA-equivalence of $f_{k,s}$ and $f_{k,t}$ implies $s = t$. Combining the results on $L_1(X), \dots, L_4(X)$ that we mentioned above, we need $L_A(X,Y)$ and $L_B(X,Y)$ to be of one of the following forms:
		
		\begin{enumerate}[label=\textbf{(\alph*)}, ref=\textbf{(\alph*)}]
			\item \label{item:a} $L_A(X,Y) = a_u X^{2^u} + a_{u+k}X^{2^{u+k}} + \overline{a}_w Y^{2^w} + \overline{a}_{w+k}Y^{2^{w+k}}$\\
			and $L_B(X,Y) = b_u X^{2^u} + b_{u+k}X^{2^{u+k}} + \overline{b}_w Y^{2^w} + \overline{b}_{w+k}Y^{2^{w+k}}$,
			
			\item \label{item:b} $L_A(X,Y) = a_u X^{2^u} + a_{u+k}X^{2^{u+k}} + \overline{a}_w Y^{2^w}$\\
			and $L_B(X,Y) = b_u X^{2^u} + b_{u+k}X^{2^{u+k}} + \overline{b}_w Y^{2^w}$,
			
			\item \label{item:c} $L_A(X,Y) = a_u X^{2^u} + a_{u+k}X^{2^{u+k}} + \overline{a}_w Y^{2^w}$\\
			and $L_B(X,Y) = b_u X^{2^u} + b_{u+k}X^{2^{u+k}} + \overline{b}_{w+k}Y^{2^{w+k}}$,
			
			\item \label{item:d} $L_A(X,Y) = a_u X^{2^u} + a_{u+k}X^{2^{u+k}} + \overline{a}_{w+k} Y^{2^{w+k}}$\\
			and $L_B(X,Y) = b_u X^{2^u} + b_{u+k}X^{2^{u+k}} + \overline{b}_{w}Y^{2^{w}}$,
			
			\item \label{item:e} $L_A(X,Y) = a_u X^{2^u} + \overline{a}_w Y^{2^w} + \overline{a}_{w+k}Y^{2^{w+k}}$\\
			and $L_B(X,Y) = b_u X^{2^u} + \overline{b}_w Y^{2^w} + \overline{b}_{w+k}Y^{2^{w+k}}$,
			
			\item \label{item:f} $L_A(X,Y) = a_u X^{2^u} + \overline{a}_w Y^{2^w} + \overline{a}_{w+k}Y^{2^{w+k}}$\\
			and $L_B(X,Y) = b_{u+k} X^{2^{u+k}} + \overline{b}_w Y^{2^w} + \overline{b}_{w+k}Y^{2^{w+k}}$,
			
			\item \label{item:g} $L_A(X,Y) = a_{u+k} X^{2^{u+k}} + \overline{a}_w Y^{2^w} + \overline{a}_{w+k}Y^{2^{w+k}}$\\
			and $L_B(X,Y) = b_{u} X^{2^{u}} + \overline{b}_w Y^{2^w} + \overline{b}_{w+k}Y^{2^{w+k}}$,
			
			\item \label{item:h} $L_A(X,Y) = a_u X^{2^u} + \overline{a}_w Y^{2^w}$ and $L_B(X,Y) = b_u X^{2^u} + \overline{b}_w Y^{2^w}$,
			
			\item \label{item:i} $L_A(X,Y) = a_u X^{2^u} + \overline{a}_w Y^{2^w}$ and $L_B(X,Y) = b_u X^{2^u} + \overline{b}_{w+k} Y^{2^{w+k}}$,
			
			\item \label{item:j} $L_A(X,Y) = a_u X^{2^u} + \overline{a}_{w+k} Y^{2^{w+k}}$ and $L_B(X,Y) = b_u X^{2^u} + \overline{b}_w Y^{2^w}$,
			
			\item \label{item:k} $L_A(X,Y) = a_u X^{2^u} + \overline{a}_w Y^{2^w}$ and $L_B(X,Y) = b_{u+k} X^{2^{u+k}} + \overline{b}_w Y^{2^w}$,
			
			\item \label{item:l} $L_A(X,Y) = a_{u+k} X^{2^{u+k}} + \overline{a}_w Y^{2^w}$ and $L_B(X,Y) = b_u X^{2^u} + \overline{b}_w Y^{2^w}$,
			
			\item \label{item:m} $L_A(X,Y) = a_u X^{2^u} + \overline{a}_w Y^{2^w}$ and $L_B(X,Y) = b_{u+k} X^{2^{u+k}} + \overline{b}_{w+k} Y^{2^{w+k}}$,
			
			\item \label{item:n} $L_A(X,Y) = a_u X^{2^u} + \overline{a}_{w+k} Y^{2^{w+k}}$ and $L_B(X,Y) = b_{u+k} X^{2^{u+k}} + \overline{b}_w Y^{2^w}$,
			
			\item \label{item:o} $L_A(X,Y) = a_{u+k} X^{2^{u+k}} + \overline{a}_{w} Y^{2^{w}}$ and $L_B(X,Y) = b_{u} X^{2^{u}} + \overline{b}_{w+k} Y^{2^{w+k}}$,
			
			\item \label{item:p} $L_A(X,Y) = a_{u+k} X^{2^{u+k}} + \overline{a}_{w+k} Y^{2^{w+k}}$ and $L_B(X,Y) = b_{u} X^{2^{u}} + \overline{b}_w Y^{2^w}$.
		\end{enumerate}
	
		We will show that all these cases either lead to a contradiction or to the conclusion that $L_A(X,Y)$ and $L_B(X,Y)$ need to be monomials of the same degree. Considering that EA-equivalence of $f_{k,s}$ and $f_{\ell,t}$ implies $k=\ell$, we rewrite \cref{eq:ZhouPottEquiv1} and \cref{eq:ZhouPottEquiv2} as
		\begin{align}
		\label{eq:Part2_Equiv1}
			L_A(x,y)^{2^k+1} + \alpha L_B(x,y)^{(2^k+1)2^s} &= N_1(x^{2^k+1}+\alpha y^{(2^k+1)2^t}) + N_3(xy) + M_A(x,y),\\
		\label{eq:Part2_Equiv2}
			L_A(x,y)L_B(x,y) &= N_2(x^{2^k+1}+\alpha y^{(2^k+1)2^t}) + N_4(xy) + M_B(x,y).
		\end{align}
		We will plug all the possible combinations \ref{item:a}--\ref{item:p} into these equations. Note that in cases \ref{item:a}--\ref{item:g}, $L_1(X)$ and $L_2(X)$ or $L_3(Y)$ and $L_4(Y)$ are binomials. Hence, these cases imply $s=0$, and the coefficients of the binomials have to satisfy \cref{eq:binomial_coefficients}. We moreover point out that on the right-hand side of \cref{eq:Part2_Equiv2}, the term $x^{2^i}y^{2^j}$ cannot occur if $i \not\equiv j \pmod {m}$.\par
		
		We first assume, that all the coefficients of $L_A(X,Y)$ and $L_B(X,Y)$ are nonzero. Then the cases \ref{item:c}, \ref{item:d}, \ref{item:f}, \ref{item:g}, \ref{item:i}--\ref{item:m} and \ref{item:p} lead to contradictions. We show how to obtain this contradiction for~\ref{item:i}, the reasoning for the other cases is analogous. If $L_A(X,Y)$ and $L_B(X,Y)$ are as in \ref{item:i}, the left-hand side of \cref{eq:Part2_Equiv2} contains the summands
		\begin{align*}
			a_u \overline{b}_{w+k} x^{2^u} y^{2^{w+k}} &&\text{and}&& b_{u} \overline{a}_w x^{2^u} y^{2^w}.
		\end{align*}
		 However, no matter how we choose $u$ and $w$, we can never represent $x^{2^u}y^{2^{w+k}}$ and $x^{2^u}y^{2^w}$ simultaneously in the form $x^{2^i}y^{2^i}$ on the right-hand side of \cref{eq:Part2_Equiv2}. Hence, this is a contradiction.\par
		 
		 For the remaining cases \ref{item:a}, \ref{item:b}, \ref{item:e}, \ref{item:h}, \ref{item:n}, and \ref{item:o}, however, \cref{eq:Part2_Equiv2} does not lead to a contradiction. Hence, we need to take a closer look at these.\par
		 
		 We start with \ref{item:n}, the same argumentation will also hold for \ref{item:o}: If we plug $L_A(X,Y)$ and $L_B(X,Y)$ of \ref{item:n} into \cref{eq:Part2_Equiv2}, the left-hand side is
		\begin{align*}
			a_u b_{u+k} x^{2^u(2^k+1)} + \overline{a}_w \overline{b}_{w+k} y^{2^w(2^k+1)} + a_u \overline{b}_{w} x^{2^u} y^{2^{w}} + b_{u+k} \overline{a}_{w+k} x^{2^{u+k}} y^{2^{w+k}}.
		\end{align*}
		While the first two summands can be represented as $N_2(x^{2^k+1} + \alpha y^{(2^k+1)2^t})$, we need $u=w$ for the remaining two summands to be representable by $N_4(xy)$. So, from now on, assume $u=w$. Next, we plug $L_A(X,Y)$ and $L_B(X,Y)$ into \cref{eq:Part2_Equiv1}. The left-hand side is
		\begin{align*}
			&a_u^{2^k+1} x^{2^u(2^k+1)} + \overline{a}_{u+k}^{2^k+1} y^{2^{u+k}(2^k+1)} + a_u^{2^k} \overline{a}_{u+k} x^{2^{u+k}} y^{2^{u+k}} + a_u \overline{a}_{u+k}^{2^k} x^{2^u} y^{2^{u+2k}}\\
			&\quad + \alpha b_{u+k}^{2^s(2^k+1)} x^{2^{s+u+k}(2^k+1)} + \alpha \overline{b}_u^{2^s(2^k+1)} y^{2^{s+u}(2^k+1)}\\
			&\qquad + \alpha b_{u+k}^{2^{s+k}} \overline{b}_u^{2^s} x^{2^{s+u+2k}} y^{2^{s+u}} + \alpha b_{u+k}^{2^s} \overline{b}_u^{2^{s+k}} x^{2^{s+u+k}} y^{2^{s+u+k}}.
		\end{align*}
		As neither $x^{2^u} y^{2^{u+2k}}$ nor $x^{2^{s+u+2k}} y^{2^{s+u}}$ can be represented on the right-hand side of~\cref{eq:Part2_Equiv1}, the corresponding coefficients need to be zero. Consequently, one of $a_u$ and $\overline{a}_{u+k}$ and one of $b_{u+k}$ and $\overline{b}_{u}$ have to be zero which means that $L_A(X,Y)$ and $L_B(X,Y)$ are monomials. Considering \cref{eq:Part2_Equiv2} under the assumption that $L_A(X,Y)$ and $L_B(X,Y)$ are monomials, it becomes clear that both polynomials need to be of the same degree. By symmetry, in case \ref{item:o}, $L_A(X,Y)$ and $L_B(X,Y)$ are monomials of the same degree as well.\par
		
		Next, we study \ref{item:h}: For this case, we obtain
		\begin{align*}
			a_u b_u x^{2^{u+1}} + \overline{a}_w \overline{b}_w y^{2^{w+1}} + \left(a_u \overline{b}_w + b_u \overline{a}_w\right) x^{2^u}y^{2^w}
		\end{align*}
		on the left-hand side of \cref{eq:Part2_Equiv2}. We consider two cases.\par
		
		\textbf{Case 1.} First, assume $\frac{a_u}{b_u} = \frac{\overline{a}_w}{\overline{b}_w}$. Then \cref{eq:Part2_Equiv2} does not provide any information as the left-hand side is a linearized polynomial. We plug $L_A(X,Y)$ and $L_B(X,Y)$ into \cref{eq:Part2_Equiv1}. Then the left-hand side of \cref{eq:Part2_Equiv1} contains the four summands
		\begin{align}
		\nonumber
			a_u^{2^k} \overline{a}_w x^{2^{u+k}} y^{2^w},&& \alpha b_u^{2^{k+s}} \overline{b}_w^{2^s} x^{2^{s+u+k}} y^{2^{s+w}}\\
		\label{eq:(h)_0}
			\text{and}\quad a_u \overline{a}_w^{2^k} x^{2^u} y^{2^{w+k}}, && \alpha b_u^{2^{s}} \overline{b}_w^{2^{s+k}} x^{2^{s+u}} y^{2^{s+w+k}}.
		\end{align}
		If $s \ne 0$, these terms cannot be represented on the right-hand side of \cref{eq:Part2_Equiv1}. Hence, the corresponding coefficients need to be zero which implies that $L_A(X,Y)$ and $L_B(X,Y)$ are monomials of the same degree. If $s=0$, we can summarize the terms of \cref{eq:(h)_0} to
		\begin{align}
		\label{eq:(h)_0.5}
			\left( a_u^{2^k} \overline{a}_w + \alpha b_u^{2^{k}} \overline{b}_w \right) x^{2^{u+k}} y^{2^w}&&\text{and}&& \left(a_u \overline{a}_w^{2^k} + \alpha b_u \overline{b}_w^{2^{k}}\right) x^{2^u} y^{2^{w+k}}.
		\end{align}
		The coefficients of these terms are zero if
		\begin{align}
		\label{eq:(h)_1}
			\alpha = \frac{a_u^{2^k} \overline{a}_w}{b_u^{2^k} \overline{b}_w} &&\text{and}&& \alpha = \frac{a_u \overline{a}_w^{2^k}}{b_u \overline{b}_w^{2^k}}
		\end{align}
		hold. As $\frac{a_u}{b_u} = \frac{\overline{a}_w}{\overline{b}_w}$, both equations are identical and we obtain 
		\[
			\alpha = \left(\frac{a_u}{b_u}\right)^{2^k+1}.
		\]
		However, since $\gcd(2^k+1,2^m-1) = 3$, this means that $\alpha$ is a cube. This is a contradiction. Hence, $L_A(X,Y)$ and $L_B(X,Y)$ need to be monomials of the same degree.
		
		\textbf{Case 2.} Now, assume $\frac{a_u}{b_u} \ne \frac{\overline{a}_w}{\overline{b}_w}$. Then $a_u \overline{b}_w + b_u \overline{a}_w \ne 0$, and we need $u=w$ to represent $x^{2^u}y^{2^w}$ on the right-hand side of \cref{eq:Part2_Equiv2}. Assuming $u=w$, we plug $L_A(X,Y)$ and $L_B(X,Y)$ into \cref{eq:Part2_Equiv1}. Then its left-hand side contains the summands from \cref{eq:(h)_0}, where $u = w$. As before, if $s \ne 0$, these terms cannot be represented on the right-hand side of~\cref{eq:Part2_Equiv1}. Hence, the corresponding coefficients need to be zero which implies that $L_A(X,Y)$ and $L_B(X,Y)$ are monomials of the same degree. If $s = 0$, we can summarize the terms in the same way as in \cref{eq:(h)_0.5}, where $u=w$. Their coefficients are zero if \cref{eq:(h)_1} with $u=w$ holds. This is only the case if
		\begin{align}
		\label{eq:(h)_2}
			\left( \frac{a_u \overline{b}_u}{b_u \overline{a}_u} \right)^{2^k-1} = 1.
		\end{align}
		Since $k$ and $m$ are coprime, we obtain $\gcd(2^k-1,2^m-1) = 2^{\gcd(k,m)}-1 = 1$. Consequently, \cref{eq:(h)_2} implies $\frac{a_u}{b_u} = \frac{\overline{a}_u}{\overline{b}_u}$. As $u=w$, this contradicts our assumption $\frac{a_u}{b_u} \ne \frac{\overline{a}_w}{\overline{b}_w}$.\par
		
		We next consider \ref{item:b}. Recall that, in this case, $s=0$ and, by the arguments below~\cref{eq:y=0_L1L2_binomials}, $\frac{a_u}{b_u} \ne \frac{a_{u+k}}{b_{u+k}}$. We can assume that all the coefficients of $L_A(X,Y)$ and $L_B(X,Y)$ are nonzero since otherwise, we end up in one of the cases \ref{item:h}--\ref{item:o}. If we plug $L_A(X,Y)$ and $L_B(X,Y)$ of \ref{item:b} into \cref{eq:Part2_Equiv2}, the left-hand side contains the summands
		\begin{align}
		\label{eq:(b)_1}
			\left(a_u \overline{b}_w + b_u \overline{a}_w\right) x^{2^u} y^{2^w} &&\text{and} && \left(a_{u+k} \overline{b}_w + b_{u+k} \overline{a}_w\right) x^{2^{u+k}} y^{2^w}.
		\end{align}
		It is not possible that both coefficients in \cref{eq:(b)_1} are zero at the same time: this would imply $\frac{a_u}{b_u} = \frac{a_{u+k}}{b_{u+k}}$ which is a contradiction. Since we cannot represent both summands of \cref{eq:(b)_1} simultaneously as $c_i x^{2^i} y^{2^i}$, it follows that one of the coefficients has to be zero. Assume the second one is zero. The reasoning for the case that the first one is zero can be done analogously. The second coefficient is zero if $\frac{\overline{a}_w}{\overline{b}_w} = \frac{a_{u+k}}{b_{u+k}}$. The remaining first term of~\cref{eq:(b)_1} can be only represented on the right-hand side of \cref{eq:Part2_Equiv2} if $u = w$. So, assume $u = w$ and consider \cref{eq:Part2_Equiv1}. On the left-hand side of \cref{eq:Part2_Equiv1}, we obtain the three summands
		\begin{align*}
			\left(a_u^{2^k} \overline{a}_u + \alpha b_u^{2^k} \overline{b}_u\right)x^{2^{u+k}} y^{2^u},\
			\left(a_u \overline{a}_u^{2^k} + \alpha b_u \overline{b}_u^{2^k}\right)x^{2^{u}} y^{2^{u+k}},\
			\left(a_{u+k}^{2^k} \overline{a}_u + \alpha b_{u+k}^{2^k} \overline{b}_u\right)x^{2^{u+2k}} y^{2^{u}}
		\end{align*}
		that cannot be represented on the corresponding right-hand side. Consequently, their coefficients have to be zero. The coefficient of the third term is zero if and only if 
		\[
			\alpha = \frac{a_{u+k}^{2^k} \overline{a}_u}{b_{u+k}^{2^k}\overline{b}_u}.
		\]
		However, as $\frac{\overline{a}_u}{\overline{b}_u} = \frac{a_{u+k}}{b_{u+k}}$, this implies that $\alpha$ is a cube which is a contradiction. Hence, case \ref{item:b} does not lead to additional solutions for $L_A(X,Y)$ and $L_B(X,Y)$. By the same reasoning, this also holds for case \ref{item:e}.\par
		Eventually, consider case \ref{item:a}. As in case \ref{item:b}, we can assume that all the coefficients are nonzero. If we plug $L_A(X,Y)$ and $L_B(X,Y)$ into \cref{eq:Part2_Equiv2}, the four terms
		\begin{align}
			\nonumber &\left(a_u \overline{b}_w + b_u \overline{a}_w\right) x^{2^u} y^{2^w},\
			\left(a_{u} \overline{b}_{w+k} + b_u \overline{a}_{w+k}\right) x^{2^u} y^{2^{w+k}},\\
			\label{eq:(a)_1}
			&\quad \left(a_{u+k} \overline{b}_w + b_{u+k} \overline{a}_w\right) x^{2^{u+k}} y^{2^w},\
			\left(a_{u+k} \overline{b}_{w+k} + b_{u+k} \overline{a}_{w+k}\right) x^{2^{u+k}} y^{2^{w+k}}
		\end{align}
		occur on the left-hand side of this equation. From the arguments below \cref{eq:y=0_L1L2_binomials}, we know that $\frac{a_u}{b_u} \ne \frac{a_{u+k}}{b_{u+k}}$ and  $\frac{\overline{a}_w}{\overline{b}_w} \ne \frac{\overline{a}_{w+k}}{\overline{b}_{w+k}}$. Hence, not all coefficients can be zero. In fact, only one coefficient out of each of the following pairs of coefficients in \cref{eq:(a)_1} can be zero: first and second, third and fourth, first and third, second and fourth. Since $x^{2^u} y^{2^{w+k}}$ and $x^{2^{u+k}}y^{2^w}$ cannot be represented simultaneously as $c_i x^{2^i}y^{2^i}$ on the right-hand side of \cref{eq:Part2_Equiv2}, the case that both the first and the fourth coefficient are zero is impossible. Hence, the only remaining case is that the second and third coefficient are zero which means $\frac{a_u}{b_u} = \frac{\overline{a}_{w+k}}{\overline{b}_{w+k}}$ and $\frac{a_{u+k}}{b_{u+k}} = \frac{\overline{a}_{w}}{\overline{b}_{w}}$. From comparing the left-hand side and the right-hand side of \cref{eq:Part2_Equiv2}, it follows that $u=w$. Next, we use \cref{eq:Part2_Equiv1}. By the same argument as in case~\ref{item:b}, it can be shown that this equation never holds. Hence, case \ref{item:a} is impossible.\par 
		
		In summary, the only possible choice for $L_A(X,Y)$ and $L_B(X,Y)$ is that both polynomials are monomials of the same degree. Hence, we have either 
		\begin{align*}
			L_A(X,Y) = L_1(X)&& \text{and}&& L_B(X,Y) = L_4(Y)
		\end{align*}
		or
		\begin{align*}
			L_A(X,Y) = L_3(Y)&& \text{and} &&L_B(X,Y) = L_2(X).
		\end{align*}
		 We will show that in both cases, $s=t$. Consider the first case. Let
		\begin{align*}
			L_A(X,Y) = a_u X^{2^u} &&\text{and}	&&L_B(X,Y) = \overline{b}_u Y^{2^u}.
		\end{align*}
		 Then \cref{eq:Part2_Equiv1} becomes
		\begin{align}
		\label{eq:final_case1}
			(a_u x^{2^u})^{2^k+1} + \alpha (\overline{b}_u y^{2^u})^{(2^k+1)2^s} = N_1(x^{2^k+1} + \alpha y^{(2^k+1)2^t}) + N_3(xy) + M_A(x,y).
		\end{align}
		Consequently, $M_A(X,Y) = 0$ and $N_3(X) = 0$. Moreover, $N_1(X)$ has to be a monomial and $s = t$. Next, we consider the second case. Let 
		\begin{align*}
			L_A(X,Y) = \overline{a}_u Y^{2^u} &&\text{and} &&L_B(X,Y) = b_u X^{2^u}. 
		\end{align*}
		Now, \cref{eq:Part2_Equiv1} is
		\begin{align}
		\label{eq:final_case2}
			(\overline{a}_u y^{2^u})^{2^k+1} + \alpha (b_u x^{2^u})^{(2^k+1)2^s} = N_1(x^{2^k+1} + \alpha y^{(2^k+1)2^t}) + N_3(xy) + M_A(x,y).
		\end{align}
		It follows that $M_A(X,Y) = 0$ and $N_3(X) = 0$. Moreover, $N_1(X)$ has to be a monomial. Assume $N_1(X) = c_r X^{2^r}$. Then \cref{eq:final_case2} becomes
		\[
			\overline{a}_u^{2^k+1} y^{2^u(2^k+1)} + \alpha b_u^{2^s(2^k+1)} x^{2^{u+s}(2^k+1)} = c_r x^{2^r(2^k+1)} + \alpha^{2^r} c_r y^{2^{r+t}(2^k+1)}.
		\]
		Consequently, we need $u \equiv r+t \pmod{m}$ and $u+s \equiv r \pmod{m}$ which is equivalent to $s \equiv -t \pmod{m}$. As $0 \le s,t \le \frac{m}{2}$, this equation only holds for $s = t = 0$ and $s = t = \frac{m}{2}$. Hence, in this second case, we also obtain $s=t$. This concludes our proof.
	\end{proof}

	From \autoref{th:ZP_inequivalence1}, we immediately obtain \autoref{cor:numberofAPN1} which gives a lower bound on the total number of CCZ-inequivalent APN functions on $\Fptwom$, where $m$ is even. We recall \autoref{cor:numberofAPN1} and give a short proof.
	
	\begin{cor:ZP}
		%	\label{cor:numberofAPN}
		On $\Fptwom$, where $m \ge 4$ is even, there exist 
		\[
		\left(\left\lfloor\frac{m}{4}\right\rfloor+1\right) \frac{\varphi(m)}{2}
		\]
		CCZ-inequivalent Pott-Zhou APN functions from \autoref{th:ZhouPottAPN}, where $\varphi$ denotes Euler's totient function.
	\end{cor:ZP}
	\begin{proof}
		According to \autoref{th:ZP_inequivalence1}, for $0 < k, \ell < \frac{m}{2}$ and $0 \le s,t \le \frac{m}{2}$, two Pott-Zhou APN functions $f_{k,s}$ and $f_{\ell,t}$ on $\Fptwom$, where $m$ is even, are CCZ-inequivalent if and only if $(k,s) \ne (\ell,t)$. We count the number of distinct pairs $(k,s)$ that can be chosen: as $0 \le s \le \frac{m}{2}$ and $s$ is even, we have $\lfloor\frac{m}{4}\rfloor$ nonzero choices for $s$ plus the choice $s=0$. As $0 < k < \frac{m}{2}$ and $\gcd(k,m)=1$, we have $\frac{\varphi(m)}{2}$ choices for $k$. 
	\end{proof}
	In \autoref{tab:ZP-APN_smallm}, we present the result of \autoref{cor:numberofAPN1} for small values of $m$. Note that from computational results, only the number of inequivalent Pott-Zhou APN functions for $m=2$ and $m=4$ was known.\par
	
	\begin{table}[h]
		\centering
		\caption{Number of CCZ-inequivalent Pott-Zhou APN functions on $\Fptwom$ for small values of $m$.}
		\label{tab:ZP-APN_smallm}		
		\begin{tabular}{*{18}{r}}
			\hline
			\rowcolor{gray!30}m &2 &4 &6 &8 &10 &12 &14 &16 &18 &20 &22 &24 &26 &28 &30 &32 &34\\
			\# 	&1 &2 &2 &6 &6 &8 &12 &20 &15 &24 &30 &28 &42 &48 &32 &72 &72\rule[.5em]{0em}{.5em}\\\hline
		\end{tabular}		
	\end{table}

	In \autoref{fig:ZP-APN}, the result is illustrated for $m \le 1000$. Note that the upper bound on the number of Pott-Zhou APN functions of $m(m+4)/16$ holds for all $m \ge 4$. It is sharp whenever $m$ is a power of $2$. The lower bound of $m\sqrt{m}/2$ holds for $m > 210$. 
	
	\begin{figure}[h]
		\centering
		\begin{tikzpicture}[every mark/.append style={mark size=.5pt, black}]
		\begin{axis}[
		xlabel={m},
		grid=both,
		minor tick num = 1,
		width=14cm,
		height=10cm,
		xmin=-50,
		ymin=-5000,
		xmax=1050,
		ymax=65000,
		scaled y ticks = false,
		]
		\addplot table [y=num,col sep=comma, only marks] {graph3.csv};
		\addplot[gray,thick,samples=2000, domain=0:1000,] {x*(x+4)/16} node [pos=.95, rectangle, left = .3cm, fill = white]{$\frac{m(m+4)}{16}$};
		\addplot[gray,thick,samples=2000, domain=0:1000] {x*sqrt(x)/2} node [pos=.95, rectangle, below = .2cm, fill = white]{$\frac{m\sqrt{m}}{2}$};		
		\end{axis}
		\end{tikzpicture}
		\caption{Number of CCZ-inequivalent Pott-Zhou APN functions on $\Fptwom$ for $m \le 1000$.}
		\label{fig:ZP-APN}
	\end{figure}
	
	Moreover, from the proof of \autoref{th:ZP_inequivalence1}, we can deduce the order of the automorphism groups of the Pott-Zhou APN functions.
	
	\begin{theorem}
	\label{cor:ZP-automorphismgroup}
		Let $f_{k,s}$ be an APN function from \autoref{th:ZhouPottAPN} on $\Fptwom$, where $m$ is even. If $m \ge 4$, then
		\[
			|\Aut_L(f_{k,s})| = 
				\begin{cases}
					3m(2^m-1)			&\text{if } s \in \{0,\frac{m}{2}\},\\
					\frac{3}{2}m(2^m-1)	&\text{otherwise,}
				\end{cases} 
		\]
		and
		\[
			|\Aut(f_{k,s})| =
				\begin{cases}
					3m2^{2m}(2^m-1) 		&\text{if } s \in \{0,\frac{m}{2}\},\\
					3m2^{2m-1}(2^m-1)	&\text{otherwise.}
				\end{cases} 			
		\]
		If $m=2$, then
		\begin{align*}
			|\Aut_L(f_{1,0})| &= 360& \text{and} &&|\Aut(f_{1,0})| &= 5760.
		\end{align*}
	\end{theorem}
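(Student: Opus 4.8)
The plan is to read the structure of linear automorphisms off the proof of \autoref{th:ZP_inequivalence1} and then count them. Since $f_{k,s}$ is quadratic, \autoref{lem:CCZ_EA} and \autoref{lem:AutomorphismGroup} give $\Aut(f_{k,s}) = \Aut_{EA}(f_{k,s}) = (\Fptwom,+)\rtimes\Aut_L(f_{k,s})$, so $|\Aut(f_{k,s})| = 2^{2m}\,|\Aut_L(f_{k,s})|$ and it suffices to determine $|\Aut_L(f_{k,s})|$. Assume first $m\ge 6$. A linear automorphism of $f_{k,s}$ is exactly a solution of \cref{eq:Part2_Equiv1} and \cref{eq:Part2_Equiv2} with $\ell=k$ and $t=s$, so the case analysis in the proof of \autoref{th:ZP_inequivalence1} applies and forces $L_A(X,Y)$ and $L_B(X,Y)$ to be monomials of the same degree, i.e. of one of the shapes
\[
\text{(i)}\quad L_A = a_u X^{2^u},\ L_B = \overline{b}_u Y^{2^u} \qquad\text{or}\qquad \text{(ii)}\quad L_A = \overline{a}_u Y^{2^u},\ L_B = b_u X^{2^u}
\]
for some $u\in\{0,\dots,m-1\}$ and nonzero coefficients, with $M_A=M_B=0$, $N_2=N_3=0$, and $N_1,N_4$ monomials determined by $L$. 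Hence $|\Aut_L(f_{k,s})|$ is the number of admissible choices of $u$ and of the coefficients of $L_A,L_B$, summed over both shapes, and the two shapes give different maps since $L_A$ depends only on $X$ in (i) and only on $Y$ in (ii).

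For shape (i) I would substitute into \cref{eq:Part2_Equiv1} and \cref{eq:Part2_Equiv2} and compare coefficients; this determines $N_1$ and $N_4$ and leaves the single relation $\overline{b}_u^{(2^k+1)2^s}=a_u^{2^k+1}\alpha^{2^u-1}$ linking the free data $u$ and $a_u\in\Fpm^*$ to $\overline{b}_u$. Since $2^m-1$ is odd, $z\mapsto z^{2^s}$ permutes $\Fpm^*$, so $z\mapsto z^{(2^k+1)2^s}$ has the same image and fibre sizes as $z\mapsto z^{2^k+1}$; by \autoref{lem:num_theory} that image is precisely the set of nonzero cubes and each cube has exactly $3$ preimages. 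As $a_u^{2^k+1}$ is always a cube and $\alpha^{2^u-1}$ is a cube exactly when $3\mid 2^u-1$, i.e. when $u$ is even, the relation is solvable for $\overline{b}_u$ precisely for even $u$, with $3$ solutions then; over the $m/2$ even values of $u$ and the $2^m-1$ choices of $a_u$ this gives $\tfrac{3m}{2}(2^m-1)$ automorphisms.

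For shape (ii) the proof of \autoref{th:ZP_inequivalence1} (the discussion of \cref{eq:final_case2}) already shows this shape can occur only when $s\equiv -s\pmod m$, i.e. $s\in\{0,\tfrac{m}{2}\}$; the same coefficient comparison then determines $N_1,N_4$ and leaves the relation $\overline{a}_u^{2^k+1}=\alpha^{1+2^{u+s}}b_u^{(2^k+1)2^s}$. Since $s$ is even, $3\mid 1+2^{u+s}$ exactly when $u$ is odd, so the same cube count yields $3$ admissible $\overline{a}_u$ for each of the $m/2$ odd $u$ and each $b_u\in\Fpm^*$; thus shape (ii) contributes $\tfrac{3m}{2}(2^m-1)$ when $s\in\{0,\tfrac{m}{2}\}$ and nothing otherwise. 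Adding the two contributions gives $|\Aut_L(f_{k,s})|=\tfrac{3m}{2}(2^m-1)$ for $s\notin\{0,\tfrac{m}{2}\}$ and $3m(2^m-1)$ for $s\in\{0,\tfrac{m}{2}\}$, and multiplying by $2^{2m}$ gives the claimed value of $|\Aut(f_{k,s})|$.

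The hard part will be the case $m=4$, which the proof of \autoref{th:ZP_inequivalence1} explicitly excludes: there $3k\equiv -k\pmod 4$, so several steps where one argues that a term cannot appear on the right-hand side break down and additional candidate mappings arise, exactly as for Gold functions in \autoref{lem:GoldAPNs_m=4}. To close this case one re-runs the affected subcases admitting these extra terms and checks that, in contrast to the Gold situation, none of the new candidates yields a valid automorphism of $f_{1,0}$ or $f_{1,2}$ (because the corresponding $L$ fails to be invertible, or is incompatible with \cref{eq:Part2_Equiv2}), so that $|\Aut_L|=3\cdot 4\cdot(2^4-1)=180$; alternatively this can be verified by computer. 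Finally, for $m=2$ the unique Pott-Zhou APN function $f_{1,0}$ on $\Fptwom=\F_{2^4}$ is equivalent to the Gold APN function $x\mapsto x^3$ on $\F_{2^4}$, so its automorphism group orders are $|\Aut_L(f_{1,0})|=360$ and $|\Aut(f_{1,0})|=5760$ by \autoref{cor:GoldAPN_AutomorphismGroup}.
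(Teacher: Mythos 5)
Your proposal is correct and follows essentially the same route as the paper: reduce to $\Aut_L$ via \autoref{lem:CCZ_EA} and \autoref{lem:AutomorphismGroup}, read the monomial shapes of $L$ off the proof of \autoref{th:ZP_inequivalence1}, and count solutions of $\alpha^{2^u-1}a_u^{2^k+1}=\overline{b}_u^{2^s(2^k+1)}$ (and its analogue from \cref{eq:final_case2} when $s\in\{0,\frac{m}{2}\}$) using the cube condition on $u$. The only differences are cosmetic: you spell out the count for the second shape, which the paper only asserts is "the same amount," and for $m=4$ the paper simply verifies the result computationally via the automorphism group of the associated code rather than re-running the case analysis.
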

	\begin{proof}
		For $m \ge 6$, consider \cref{eq:final_case1} and assume $N_1(X) = c_u X^{2^u}$. Then \cref{eq:final_case1} becomes
		\[
			a_u^{2^k+1} x^{2^u(2^k+1)} + \alpha \overline{b}_u^{2^s(2^k+1)} y^{2^{s+u}(2^k+1)} = c_u x^{2^u(2^k+1)} + \alpha^{2^u}c_uy^{2^{s+u}(2^k+1)}.
		\]
		It follows that
		\begin{align*}
			a_u^{2^k+1} &= c_u &\text{and}	&& \alpha \overline{b}_u^{2^s(2^k+1)} &= \alpha^{2^u} c_u
		\end{align*}
		which is equivalent to
		\[
			\alpha^{2^u-1} a_u^{2^k+1} = \overline{b}_u^{2^s(2^k+1)}.
		\]
		This equation can only hold if $\alpha^{2^u-1}$ is a cube, which is the case if and only if $u$ is even. Hence, we first have $\frac{m}{2}$ choices for $u$. Then we can choose $a_u$ from $\Fpm^*$, hence we have $2^m-1$ choices for $a_u$. Finally, every choice of $a_u$ results in $3$ choices for $\overline{b}_u$, since $x \mapsto x^{2^k+1}$ is a $3$-to-$1$ mapping on $\Fpm^*$. If $s \in \{0, \frac{m}{2}\}$, we additionally obtain the same amount of choices as from \cref{eq:final_case1} also from \cref{eq:final_case2}. Consequently, we have twice as many total possibilities in this case.\par
		
		As \autoref{th:ZP_inequivalence1} only holds for $m \ge 6$, we have checked the cases $m = 2$ and $m=4$ computationally with \texttt{Magma}~\cite{magma}. More precisely, we computed the automorphism groups of the related codes
		\[
			\begin{pmatrix}1\\x\\f_{k,s}(x)\end{pmatrix},
		\]
		where $x \in \Fptwom$. For $m = 4$, we obtain the same result as for $m \ge 6$. However, for $m=2$, the only Pott-Zhou APN function on $\F_{2^4}$ is linearly equivalent to the Gold APN~function $x \mapsto x^3$. Hence, $|\Aut_L(f_{1,0})| = 360$ and $|\Aut(f_{1,0})| = 5760$ as we have shown in \autoref{cor:GoldAPN_AutomorphismGroup}.
	\end{proof}

\section{Conclusion and open questions}
	In the present paper, we establish a lower bound on the total number of CCZ-inequivalent APN functions on the finite field $\Fptwom$, where $m$ is even. We show that in any such field plenty of these functions do exist. From this result, the following questions arise naturally:
	\begin{itemize}
		\item Is there a similar lower bound on the number of APN functions on $\F_{2^n}$, where $n$ is not a multiple of 4? To answer this question, a closer look at the list by \textcite[Table~3]{budaghyan2019} might be helpful: first, to check on which fields the respective functions do exist, and second, to see whether the construction depends on parameters that could provide inequivalent APN functions.
		\item Can the lower bound presented in this paper be improved? From the list by \textcite{budaghyan2019}, the class (F11), that was recently discovered by \textcite{taniguchi2019}, seems to be a canonical starting point to work on this problem as its structure is very similar to the structure of the Pott-Zhou APN functions.
	\end{itemize}
	
\section*{Acknowledgments}
	The authors thank Satoshi Yoshiara for his valuable comments about the automorphism groups of quadratic APN functions under EA- and CCZ-equivalence.

\printbibliography
	
\end{document}